\newcommand{\pcite}[1]{\citeauthor{#1}'s \citeyearpar{#1}}
\newcommand{\df}{\mathrm{d}}
\def\baro{\vskip  .2truecm\hfill \hrule height.5pt \vskip  .2truecm}
\def\barba{\vskip -.1truecm\hfill \hrule height.5pt \vskip .4truecm}
\newtheorem{theorem}{Theorem}[section]
\newtheorem{lemma}[theorem]{Lemma}
\newtheorem{corollary}[theorem]{Corollary}
\newtheorem{remark}[theorem]{Remark}
\newtheorem{proposition}[theorem]{Proposition}
\newcounter{cond}
\newcommand{\X}{{\mathsf{X}}}
\newcommand{\Y}{{\mathsf{Y}}}
\title{Geometric convergence bounds for Markov chains in Wasserstein
  distance based on generalized drift and contraction conditions}
\author{Qian Qin \\ School of Statistics \\ University of Minnesota \and James P. Hobert \\ Department of Statistics
  \\ University of Florida} 
\date{\today} \linespread{1.5}
\keywords{Convergence analysis, Exponential convergence,
  Kantorovich-Rubinstein distance, Lyapunov drift function, Polish
  space, Quantitative bound}
\begin{document}
	
\maketitle
	
\begin{abstract}
  Let $(X_n)_{n=0}^\infty$ denote a Markov chain on a
  Polish space that has a stationary distribution~$\varpi$.  This
  article concerns upper bounds on the Wasserstein distance
  between the distribution of $X_n$ and~$\varpi$.  In particular, an
  explicit geometric bound on the distance to stationarity is derived
  using generalized drift and contraction conditions whose parameters
  vary across the state space.  These new types of drift and
  contraction allow for sharper convergence bounds than the standard
  versions, whose parameters are constant.  Application of the result
  is illustrated in the context of a non-linear autoregressive
  process and a Gibbs algorithm for a random effects model.
\end{abstract}
	
\section{Introduction}
\label{dec:intro}

The study of convergence to stationarity of Markov chains commonly
requires the specification of a metric on an appropriate space of
probability distributions.  The standard has long been total variation
(TV) distance, but, over the last decade or so, Wasserstein distance
has received a good deal of attention as well.  One obvious reason for
studying convergence in Wasserstein distance is that there exist
Markov chains that do not actually converge in TV distance, but do
converge in some Wasserstein distance \citep[see,
  e.g.,][]{hairer2011asymptotic,butkovsky2014subgeometric}.  Another, perhaps more
important, reason stems from the current focus on so-called big data
problems, which leads to the study of Markov chains on
high-dimensional state spaces.  Indeed, it is becoming clear that the
techniques used for developing Wasserstein bounds are more robust to
increasing dimension than are those used to construct TV bounds
\citep[see,
  e.g.,][]{hairer2011asymptotic,hairer2014spectral,durmus2015quantitative,eberle2019mckean,qin2020wasserstein}.
In this paper, we study convergence rates of Markov chains with
respect to Wasserstein distances.  In particular, we develop explicit
geometric (exponential) convergence bounds using generalized (or localized)
versions of the usual drift and contraction conditions.

Previous work devoted to convergence analysis of Markov chains with
respect to Wasserstein distances includes \cite{jarner2001locally},
\cite{hairer2011asymptotic}, \cite{butkovsky2014subgeometric},
\cite{durmus2015quantitative}, and \cite{douc2018markov}.  
A recurring theme in these papers is
the combination of drift and contraction.  The basic
program is to first establish a strong contraction condition on a
coupling set (which is a subset of $\X \times \X$, where $\X$ is the
state space), and then to use a Lyapunov drift condition to drive a
coupled version of the Markov chain towards that subset. 
The goal of this article is to provide a more refined version of this program, where instead of considering only two sets of points in $\X \times \X$, i.e., the coupling set and its complement, one takes into account the localized behavior of the Markov chain at each point of the product state space.
Indeed, we demonstrate that geometric convergence bounds can be constructed using a generalized version of drift and contraction, which we now describe.

When developing drift and contraction conditions for specific
problems, the parameters in these inequalities are often initially
non-constant.  The varying parameters may encode rich information
about the dynamics of the chain.  The usual drift and contraction
conditions (with constant parameters) are typically obtained by taking
the supremum over the coupling set, and again over
the compliment of the coupling set.  Naturally, this process can
result in a substantial loss of information.  The bounds that we
provide can be constructed directly from the drift and
contraction conditions with non-constant parameters --- the
generalized drift and contraction conditions.  This procedure does not
require selecting a coupling set, and can potentially lead to sharper
bounds based on weaker assumptions, compared to previous results.  For
example, our upper bound on the geometric convergence rate is always
better (i.e., smaller) than that in \cite{durmus2015quantitative}.
Our work draws inspiration from the ``small function'' version of the
minorization condition \citep[see, e.g.,][Section~2.3]{numm:1984},
which can be considered a generalized version of the usual
minorization condition (with constant parameter).  The local
contractive behavior of a Markov chain considered in
\cite{steinsaltz1999locally} and \cite{eberle2019quantitative} is also
related to the generalized conditions that we use.

The rest of the article is organized as follows.  In
Section~\ref{sec:general}, after setting notation, we give a detailed
account of generalized drift and contraction conditions.  We construct
a geometric convergence bound based on these conditions, and compare
it to bounds based on standard drift and contraction.  The proofs are
postponed to Section~\ref{sec:proof}.  In Section~\ref{sec:example},
we use a perturbed autoregressive process and a Gibbs algorithm for a random effects model to demonstrate how our
results can be applied.  These applications provide concrete examples
of the extent to which the new bound improves upon standard ones.
Finally, Appendix~\ref{app:compare} contains a rigorous comparison of
the bounds developed herein with those of
\citet{durmus2015quantitative}, and Appendix~\ref{app:optimize}
contains some technical details supporting the analysis in
Section~\ref{sec:example}.

\section{Generalized Drift and Contraction}
\label{sec:general}
	
%Let $(\X, \mathcal{B})$ be a countably generated measurable space such
%that each singleton in~$\X$ is measurable.  Let $\varrho: \X \times \X
%\to [0,\infty)$ be a measurable metric.
Let  $(\X,\varrho)$ be a complete separable metric space (i.e., Polish metric space), and assume that~$\mathcal{B}$
is the associated Borel $\sigma$-algebra. 
%When we assume that $(\X,
%  \varrho, \mathcal{B})$ is a Polish metric space, we mean that $(\X,
%  \varrho)$ is a complete separable metric space, and that~$\mathcal{B}$
%  is the associated Borel algebra.  
%  When Polish-ness is not assumed,
%  we make no explicit assumption about the relationship
%  between~$\varrho$ and~$\mathcal{B}$.  
  Let $\mathcal{P}(\X)$ denote the
  set of probability measures on $(\X, \mathcal{B})$, and let
  $\delta_x$ denote the point mass (or Dirac measure) at $x$.  For
  $\mu, \nu \in \mathcal{P}(\X)$, let
\[
\mathcal{C}(\mu, \nu) = \big \{ \upsilon \in \mathcal{P}(\X \times
\X): \; \upsilon(A_1 \times \X) = \mu(A_1) \,, \; \upsilon(\X \times
A_2) = \nu(A_2) \text{ for all } A_1,A_2 \in \mathcal{B} \big \} \,.
\]
This is the set of \textit{couplings} of $\mu$ and $\nu$.  
Let $\psi: \X \times \X \to [0,\infty)$ be a lower semi-continuous function.
This function will be referred to as a cost function, and is usually taken to be a distance-like function that is vanishing if and only if its two arguments coincide.
The
Wasserstein divergence between $\mu \in \mathcal{P}(\X)$ and $\nu \in \mathcal{P}(\X)$ induced by~$\psi$ is defined to be
\[
W_{\psi}(\mu, \nu) = \inf_{\upsilon \in \mathcal{C}(\mu,\nu)} \int_{\X
  \times \X} \psi(x,y) \upsilon(\df x, \df y) \,.
\]
Taking $\psi = \varrho$ gives the $L_1$-Wasserstein (or Kantorovich-Rubinstein) {\it distance} induced by~$\varrho$.
In fact, for $p \geq 1$, one can define
\[
\mathcal{P}_{\varrho,p}(\X) = \left\{ \mu \in \mathcal{P}(\X) \,: \;
\int_{\X} \varrho(x, y)^p \, \mu(\df y) < \infty \text{ for some } x \in \X
\right\} ,
\]
and let
\[
W_{\varrho,p} (\mu, \nu) = \left( \inf_{\upsilon \in \mathcal{C}(\mu,\nu)} \int_{\X
	\times \X} \varrho(x,y)^p \, \upsilon(\df x, \df y) \right)^{1/p} .
\]
Then $W_{\varrho,p}$ is the $L_p$-Wasserstein distance, and $(\mathcal{P}_{\varrho,p}(\X), W_{\varrho,p})$ forms a Polish metric space \citep[see, e.g.,][Theorem 6.18]{villani2008optimal}.
%Define 
%\[
%\mathcal{P}_{\varrho}(\X) = \left\{ \mu \in \mathcal{P}(\X) \,: \;
%\int_{\X} \varrho(x, y) \mu(\df y) < \infty \text{ for some } x \in \X
%\right\} \,.
%\]
%Taking $\psi = \varrho$ in $W_{\psi}$ gives a genuine metric on $\mathcal{P}_{\varrho}(\X)$, and
%$(\mathcal{P}_{\varrho}(\X), W_{\varrho})$ is itself a Polish metric space
%\citep[see, e.g.,][Definition 6.1 and Theorem
%  6.18]{villani2008optimal}.  $W_{\varrho}$ is called the
%$L_1$-Wasserstein (or Kantorovich-Rubinstein) \textit{distance}
%induced by~$\varrho$.

Let $P: \X \times \mathcal{B} \to [0,1]$ be a Markov transition kernel
(Mtk), and for a positive integer~$n$, let $P^n: \X \times \mathcal{B} \to
[0,1]$ be the corresponding $n$-step Mtk.  (As usual, we write $P$
instead of $P^1$.)  Let $P^0$ be the identity kernel, i.e., $P^0(x,A)
= 1_{x \in A}$ for $A \in \mathcal{B}$.  For $\mu \in \mathcal{P}(\X)$ and a measurable
function $f: \X \to \mathbb{R}$, let $\mu f = \int_{\X} f(x) \mu(\df
x)$, $\mu P^n (\cdot) = \int_{\X} P^n(x,\cdot) \mu(\df x)$, and $P^n f
(\cdot) = \int_{\X} f(x) P^n(\cdot, \df x)$ (assuming the integrals
are well-defined).

Our goal is to establish new conditions under which the Markov chain
defined by $P$ converges in $W_{\psi}$ to a limiting distribution $\varpi
\in \mathcal{P}(\X)$ at a geometric rate.  More specifically, based on these new conditions, we will
construct convergence bounds of the form
\[
W_{\psi}(\delta_x P^n, \varpi) \leq c_x \, \rho^n \,,
\]
where $x \in \X$, $n \in \{0,1,2,\dots\} =:
\mathbb{Z}_+$, $c_x < \infty$, and $\rho < 1$.  
Moreover, we will
provide explicit formulas for $c_x$ and $\rho$.  
$\rho$ is an upper bound on the geometric convergence rate of the chain, defined as
\[
\exp \left( \sup_{x \in \X} \limsup_{n \to \infty} \frac{\log W_{\psi}(\delta_x^n, \varpi)}{n} \right)
\] 
\citep[see][page 40]{roberts2001geometric}.
We will be particularly interested in finding smaller values of~$\rho$ compared to existing bounds in the literature.

We first review a set of standard drift and contraction conditions that can be used to produce this type of bound.
\begin{enumerate}
	\item [$(A1)$] There exist a measurable function $V: \X \to
	[0,\infty)$ and $a \in [0,\infty)$ such that
	\begin{equation}
	\label{ine:Vpsi}
	\psi(x,y) \leq a [V(x) + V(y) + 1] \,, \quad (x,y) \in \X
	\times \X \,,
	\end{equation}
	and there exist $\eta \in [0,1)$ and $L \in [0,\infty)$ such that
	\begin{equation}
	\label{ine:drift}
	PV(x) \leq \eta V(x) + L \,, \quad x \in \X \,.
	\end{equation}
	\item [$(A2)$] There exist $\gamma \in [0,1)$ and $K \in [0,\infty)$ such that
	for each $(x,y) \in \X \times \X$,
	\[
	W_{\psi}(\delta_x P, \delta_y P) \leq \begin{cases} \gamma \psi(x,y)
	\,, & (x,y) \in C \,,\\ K \psi(x,y) \,, & (x,y) \not\in C \,,
	\end{cases}
	\]
	where the coupling set~$C$ is defined to be $\big \{ (x,y) \in \X
	\times \X: V(x) + V(y) < d \big \}$, with $d > 2L/(1-\eta)$.
\end{enumerate}
Condition $(A2)$, referred to as a contraction condition, partitions
the product state space $\X \times \X$ into two regions,~$C$ and its
complement.  The coupling set~$C$ is a ``good" set, where two coupled
copies of the Markov chain tend to approach each
other.  Condition $(A1)$, referred to as a drift condition, is a
classical assumption that drives the coupled chains towards~$C$.
Taking into account the magnitude of contraction and drift (quantified
by parameters such as~$\gamma$,~$L$, and~$K$), it's possible to
construct a quantitative convergence bound.  The following is an
example of this.

\begin{corollary}
  \label{cor:geometric}  
  Suppose that $(A1)$, $(A2)$, and the following condition all hold:
\begin{enumerate}
  \item[$(A3)$] Either $K \leq 1$ or
\begin{equation}
  \label{ine:KA1}
  \log K \log (2L+1) < \log \frac{1}{\gamma} \log \frac{d + 1}{\eta d
    + 2L + 1} \,.
\end{equation}
\end{enumerate}
Then, for each $\mu,\nu \in \mathcal{P}(\X)$ and real number~$r$ such that
\begin{equation}
  \label{ine:r}
  \frac{\log(2L+1)}{\log(2L+1) + \log (1/\gamma)} < r < \frac{\log [
       (d+1) / (\eta d + 2L + 1) ]}{\log (K \vee 1) + \log [  (d+1) / (\eta d +
       2L + 1) ]} \,,
\end{equation}
we have
\begin{equation}
  \nonumber
  W_{\psi}(\mu P^n, \nu P^n) \leq a (\mu V + \nu V + 1) \rho_r^n \,, \quad n \in \mathbb{Z}_+,
\end{equation} 
where
\begin{equation}
  \nonumber
  \rho_r = \left[ \gamma^r (2L+1)^{1-r} \right] \vee \left[ K^r \left(
    \frac{\eta d + 2L + 1}{d + 1} \right)^{1-r} \right] < 1 \,.
\end{equation}
In particular, if~$P$ admits a stationary distribution~$\varpi$, then for $\mu \in \mathcal{P}(\X)$,
\[
W_{\psi}(\mu P^n,\varpi) \leq a \left( \mu V + \frac{L}{1-\eta} + 1 \right) \rho_r^n \,, \quad n \in \mathbb{Z}_+.
\]
\end{corollary}

\begin{remark}
The fact that $d > 2L / (1-\eta)$ implies that $(\eta d + 2L +
1)/(d+1) < 1$, and it follows that
\[
0 \leq \frac{\log(2L+1)}{\log(2L+1) + \log (1/\gamma)} < \frac{\log [
	(d+1) / (\eta d + 2L + 1) ]}{\log (K \vee 1) + \log [  (d+1) / (\eta d +
	2L + 1) ]} \leq 1
\]
whenever $K \leq 1$ or~\eqref{ine:KA1} is satisfied.
Thus,~\eqref{ine:r} makes sense, and $r \in (0,1)$.
\end{remark}
\begin{remark} \label{rem:stationary}
We provide conditions for the existence and uniqueness of stationary distributions in a more general context in Propositions~\ref{pro:limiting} and~\ref{pro:stationary}.
Based on these propositions, one can check that, if $(A1)$-$(A3)$ hold and $\psi = \varrho$, then~$P$ admits a unique stationary distribution~$\varpi$.
\end{remark}
\begin{remark}
	One can use Proposition~\ref{pro:limiting} to derive the following alternative convergence bound when $(A1)$-$(A3)$ hold and $\psi = \varrho$.
	For $x \in \X$ and~$r$ satisfying~\eqref{ine:r},
	\[
	W_{\varrho}(\delta_x P^n, \varpi) \leq a \frac{(\eta+1)V(x) + L + 1}{1 - \rho_r} \rho_r^n \,, \quad n \in \mathbb{Z}_+\,,
	\]
	where $\rho_r \in [0,1)$ is given in Corollary~\ref{cor:geometric}.
	However, this bound is slightly looser than the one in Corollary~\ref{cor:geometric}.
\end{remark}
	
	Corollary~\ref{cor:geometric} can be derived using the main theorem of this article, which will soon be stated.
	Results similar to Corollary~\ref{cor:geometric} have been derived in various contexts; see, e.g., \cite{butkovsky2014subgeometric} and \cite{durmus2015quantitative}. 
	The most recent account is Theorem~20.4.5 of \cite{douc2018markov}, where the authors establish geometric convergence under a condition akin to $(A1)$ along with $(A2)$ with $K \leq 1$.
	\pcite{douc2018markov} Theorem~20.4.5 does not provide a fully explicit convergence bound, although it is possible to derive such a bound based on the proof of said result, and it will resemble what is given in Corollary~\ref{cor:geometric}.
	A fully computable geometric convergence bound is given in \cite{durmus2015quantitative} for the case that $\psi=\varrho$,~$\varrho$ is bounded, and $K \leq 1$.
	The bound therein is similar to what is given in Corollary~\ref{cor:geometric}, albeit slightly looser, as shown in Section~\ref{app:compare} of the Appendix.

	One can also derive the following continuous version of Corollary~\ref{cor:geometric}, whose proof will be given in Section~\ref{sec:proof}.
	\begin{corollary} \label{cor:continuous}
		Let $(P^t)_{t \geq 0}$ be a Markov semigroup on $(\X,\mathcal{B})$.
		Suppose that there exists $t_* > 0$ such that
		$P^{t_*}$ (in place of~$P$) satisfies $(A1)$-$(A3)$, and that~$P^{t_*}$ admits a unique stationary distribution~$\varpi$.
		Suppose further that
		there exists $b < \infty$ such that for every $(x,y) \in \X \times \X$ and $t \in [0,t_*)$,
		\begin{equation} \label{ine:smallt}
		W_{\psi}(\delta_x P^t, \delta_y P^t) \leq b\psi(x,y) \;.
		\end{equation}
		Then~$\varpi$ is also the unique stationary distribution of $(P^t)_t$.
		Moreover, for $\mu \in \mathcal{P}(\X)$ and $t \geq 0$,
		\[
		W_{\psi}(\mu P^t, \varpi) \leq ab \left( \mu V + \frac{L}{1-\eta} + 1 \right) \rho_r^{\lfloor
			t/t_* \rfloor} \;,
		\]
		where $\lfloor \cdot \rfloor$ returns the largest
		integer that does not exceed its argument,~$r$ satisfies~\eqref{ine:r}, and $\rho_r
		\in [0,1)$ is defined as in Corollary~\ref{cor:geometric}.
	\end{corollary}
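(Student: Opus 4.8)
The plan is to bootstrap everything off Corollary~\ref{cor:geometric} applied to the single kernel $P^{t_*}$, and then to bridge the gaps between the sampling times $\{nt_*\}_{n\in\mathbb{Z}_+}$ using the uniform short-time contraction~\eqref{ine:smallt}. First I would invoke Corollary~\ref{cor:geometric} with $P^{t_*}$ in place of $P$: by assumption $(A1)$, $(A2)$, and $(A3)$ hold for $P^{t_*}$, so the corollary furnishes a probability measure $\pi$ that is the unique stationary distribution of $P^{t_*}$, lies in $\mathcal{P}_\psi(\X)$, and satisfies
\[
W_\psi\big(\mu (P^{t_*})^n, \pi\big) \leq a\left( \frac{(\eta+1)\mu V + L + 1}{1-\rho_r} \right)\rho_r^n
\]
for every $\mu\in\mathcal{P}(\X)$ with $\mu V<\infty$ and every $n\in\mathbb{Z}_+$, where $r$ satisfies~\eqref{ine:r} and $\rho_r$ is given by~\eqref{eq:rhor}.

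Next I would upgrade $\pi$ to a stationary distribution of the whole semigroup. For any $t\geq 0$, the semigroup property gives $(\pi P^t)P^{t_*}=\pi P^{t+t_*}=(\pi P^{t_*})P^t=\pi P^t$, so $\pi P^t$ is stationary for $P^{t_*}$; by the uniqueness just obtained, $\pi P^t=\pi$. Hence $\pi$ is stationary for $\{P^t\}$. Conversely, any stationary distribution of $\{P^t\}$ is in particular stationary for $P^{t_*}$ and therefore equals $\pi$, which gives the uniqueness assertion and also $\pi\in\mathcal{P}_\psi(\X)$.

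For the quantitative bound, fix $t\geq 0$, put $n=\lfloor t/t_*\rfloor$ and $s=t-nt_*\in[0,t_*)$, and factor $\mu P^t=(\mu P^{nt_*})P^s$ while using $\pi=\pi P^s$ from the previous step. A standard argument of the type used to prove Corollary~\ref{cor:geometric} --- take an (almost) optimal coupling of $\mu P^{nt_*}$ and $\pi$, glue to it a measurable family of (almost) optimal couplings of $\delta_x P^s$ and $\delta_y P^s$, and estimate the resulting cost via~\eqref{ine:smallt} --- shows that $W_\psi(\nu_1 P^s,\nu_2 P^s)\leq b\,W_\psi(\nu_1,\nu_2)$ for all $\nu_1,\nu_2\in\mathcal{P}(\X)$. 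In particular,
\[
W_\psi(\mu P^t,\pi)=W_\psi\big((\mu P^{nt_*})P^s,\pi P^s\big)\leq b\,W_\psi\big(\mu P^{nt_*},\pi\big)=b\,W_\psi\big(\mu(P^{t_*})^n,\pi\big),
\]
and combining this with the bound from the first step yields
\[
W_\psi(\mu P^t,\pi)\leq ab\left( \frac{(\eta+1)\mu V + L + 1}{1-\rho_r}\right)\rho_r^n=ab\left( \frac{(\eta+1)\mu V + L + 1}{1-\rho_r}\right)\rho_r^{\lfloor t/t_*\rfloor},
\]
which is exactly the claimed inequality (note $b\ge1$ since~\eqref{ine:smallt} holds at $t=0$, so the case $s=0$ is consistent).

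The proof is essentially a routine reduction; the only ingredient requiring care is the contraction estimate $W_\psi(\nu_1 P^s,\nu_2 P^s)\leq b\,W_\psi(\nu_1,\nu_2)$, since gluing pointwise couplings into a coupling of the mixtures needs a measurable selection of near-optimal couplings in the Polish space $(\X,\psi,\mathcal{B})$. I expect this to be the main (but standard) technical point, and it is presumably already available from the coupling machinery developed for Corollary~\ref{cor:geometric}.
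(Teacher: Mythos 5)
Your proposal is correct and follows essentially the same route as the paper: apply Corollary~\ref{cor:geometric} to $P^{t_*}$, promote $\pi$ to a stationary distribution of the semigroup via uniqueness for $P^{t_*}$, and bridge the fractional time $s=t-\lfloor t/t_*\rfloor t_*$ using the short-time contraction~\eqref{ine:smallt}. The ``gluing a measurable family of couplings'' step you flag as the one technical point is precisely the content of Lemma~\ref{lem:convex} (built on the coupling-kernel existence result of Lemma~\ref{lem:kernel}), which the paper invokes to obtain $W_\psi(\mu P^{nt_*}P^s,\pi P^s)\le b\,W_\psi(\mu P^{nt_*},\pi)$.
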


	Looking back at Condition~$(A2)$, a natural question that can be raised is, whether a more delicate analysis is possible if one divides the product space $\X \times \X$ into more than two parts.
	To be precise, instead of categorizing the behavior of $W_{\psi}(\delta_x P, \delta_y P)$ into two cases, can one gain more by taking into account the local behavior of $W_{\psi}(\delta_x P, \delta_y P)$ for each value of $(x,y)$?
	This prompts us to study the convergence properties of a Markov chain under the following generalized versions
	of $(A1)$ and $(A2)$.
	\begin{enumerate}
		\item [$(B1)$] There exists a measurable function $V:
		\X \to [0,\infty)$ and $a \in [0,\infty)$ such that
		\[
		\psi(x,y) \leq a [V(x) + V(y) + 1] \,, \quad (x,y)
		\in \X \times \X \tag{\ref{ine:Vpsi}} \,,
		\]
		and $PV(x) < \infty$ for each $x \in \X$.
		\item [$(B2)$] There exists a measurable function
		$\Gamma: \X \times \X \to [0,\infty)$ such that for
		each $(x,y) \in \X \times \X$,
		\[
		W_{\psi}(\delta_x P, \delta_y P) \leq \Gamma(x,y) \,
		\psi(x,y) \,.
		\]
	\end{enumerate}
%	Note that by $(B1)$, $W_{\psi}(\delta_x P, \delta_y P) < \infty$ for each $(x,y) \in \X \times \X$, and thus, $(B2)$ holds trivially with
%	\[
%	\Gamma(x,y) = \frac{W_{\psi}(\delta_x P, \delta_y
%		P)}{\psi(x,y)} 1_{x \neq y} \,,
%	\]
%	assuming that the right-hand-side is measurable (which it is, as indicated by Lemma~\ref{lem:kernel}).
%	Of course, in practice, we would like to find a~$\Gamma$ that
%	yields a sharp contraction inequality, but is also simple and
%	well-behaved.  
%	See
%	Section~\ref{sec:example} for concrete examples.
	The relationship between the two conditions above and their standard counterparts is quite obvious.
	In particular, $(A2)$ is just $(B2)$ when $\Gamma$ is constant over some coupling set~$C$ as well as over its complement, assuming that $\Gamma(x,y) < 1$ when $(x,y) \in C$.
	$(B2)$, while being weaker than $(A2)$, may also incorporate information on the localized contractive behavior of~$P$ that $(A2)$ ignores (depending on how~$\Gamma$ is constructed). 
	
	Combining $(B1)$ and $(B2)$ with an analog of $(A3)$ that regulates the relationship between
	$\Gamma(x,y)$ and $(PV(x), PV(y))$ yields our main theorem, which we now state.

	\begin{theorem} \label{thm:main}
		Assume that $(B1)$ and $(B2)$ hold.
		Let $\Lambda: \X \times \X \to [0,\infty)$ be such that
		\[
		\Lambda(x,y) \geq \frac{PV(x) + PV(y) + 1}{V(x) + V(y)
			+ 1} \,.
		\]
		Assume further that the following condition holds:
		\begin{enumerate}
			\item [$(B3)$] There exists $r \in (0,1)$ such that
			\begin{equation} \label{ine:r2}
			\sup_{(x,y) \in \X \times \X } \Gamma(x,y)^r \Lambda(x,y)^{1-r} < 1 \,.
			\end{equation}
		\end{enumerate}
		Then, for $\mu, \nu \in \mathcal{P}(\X)$,
		\[
		W_{\psi}(\mu P^n, \nu P^n) \leq a (\mu V + \nu V+1) \rho_r^n \,, \quad n
		\in \mathbb{Z}_+ \,,
		\]
		where $\rho_r = \sup_{(x,y) \in \X \times \X}
		\Gamma(x,y)^r \Lambda(x,y)^{1-r} < 1$. 
		In particular, if~$P$ admits a stationary distribution~$\varpi$, then
		\begin{equation} \label{ine:geoemtric-pi}
		W_{\psi}(\mu P^n, \varpi) \leq a (\mu V + \varpi V+1) \rho_r^n \,, \quad n
		\in \mathbb{Z}_+ \,.
		\end{equation}
	\end{theorem}
	\begin{remark}
		It can be shown that $(B3)$ holds whenever
		\begin{equation} \nonumber
		\sup_{(x,y) \in \X \times \X} [\Gamma(x,y) \wedge
		\Lambda(x,y)] < 1 \,,
		\end{equation}
		and
		\begin{equation} \nonumber
		\left( \sup_{x,y: \, \Lambda(x,y) > \Gamma(x,y)}
		\frac{\log \Lambda(x,y)}{\log \Lambda(x,y) - \log
			\Gamma(x,y)} \right) \vee 0 < \left( \inf_{x,y: \,
			\Lambda(x,y) < \Gamma(x,y)} \frac{\log
			\Lambda(x,y)}{\log \Lambda(x,y) - \log \Gamma(x,y)}
		\right) \wedge 1 \,.
		\end{equation}
		In this case, $\rho_r < 1$ whenever
		\begin{equation} \nonumber
		\left( \sup_{x,y: \, \Lambda(x,y) > \Gamma(x,y)}
		\frac{\log \Lambda(x,y)}{\log \Lambda(x,y) - \log
			\Gamma(x,y)} \right) \vee 0 < r < \left( \inf_{x,y:
			\, \Lambda(x,y) < \Gamma(x,y)} \frac{\log
			\Lambda(x,y)}{\log \Lambda(x,y) - \log \Gamma(x,y)}
		\right) \wedge 1 \,.
		\end{equation}
	\end{remark}
	
	\begin{remark} \label{rem:piV}
		The bound in~\eqref{ine:geoemtric-pi} is nontrivial only if $\varpi V < \infty$.
		This holds under $(A1)$.
		Indeed, under $(A1)$, $\varpi V \leq L/(1-\eta)$ \citep[][Proposition 4.24]{hairer2018ergodic}.
		If~$\psi$ satisfies additional conditions, e.g., $\psi = \varrho$, then an alternative bound holds without the assumption $\varpi V < \infty$.
		See Propositions~\ref{pro:limiting} and~\ref{pro:stationary}.
	\end{remark}

	The next two propositions give conditions for the existence and uniqueness of stationary distributions.
	
	\begin{proposition} \label{pro:limiting}
		Suppose that $(B1)$-$(B3)$ are satisfied, and that the following also holds:
		\begin{enumerate}
			\item [$(B4)$] There exists $p \in [1,\infty)$ such that
			\[
			\varrho(x,y)^p \leq \psi(x,y), \quad (x,y) \in \X \times \X.
			\]
		\end{enumerate}
		Then there exists $\varpi \in \mathcal{P}_{\varrho,p}(\X)$ and $r \in (0,1)$ such that, for $x \in \X$,
			\begin{equation} \label{ine:geometric}
			W_{\varrho,p}(\delta_x P^n, \varpi) \leq a^{1/p} [PV(x) + V(x) +1]^{1/p} \frac{\rho_r^{n/p}}{1 - \rho_r^{1/p}} \,, \quad n \in \mathbb{Z}_+\,,
			\end{equation}
		where $\rho_r \in [0,1)$ is given in Theorem~\ref{thm:main}.
		Moreover,~$P$ admits at most one stationary distribution.
	\end{proposition}
	
	The condition (B4) has
	been used previously as a condition to guarantee the existence of a
	limiting distribution 
	\citep[][Theorem 20.2.1]{douc2018markov}.
	It may be difficult for $(B4)$ to hold when~$\varrho$ is unbounded.
	To circumvent this, one can replace~$\varrho$ with a bounded metric that is topologically equivalent, such as $\varrho \wedge 1$, in the initial setup of the Polish metric space.
	
	Let $C_b(\X)$ be the set of bounded, continuous real-valued
        functions on~$\X$.  
%        We say that a sequence $(\mu_n)_n
%        \subset \mathcal{P}(\X)$ converges weakly to $\mu \in
%        \mathcal{P}(\X)$ if $\lim\limits_{n \to \infty} \mu_n f = \mu
%        f$ for every $f \in C_b(\X)$.  
        We say that~$P$ is Feller if $Pf \in
        C_b(\X)$ for each $f \in C_b(\X)$.  
	
	\begin{proposition} \label{pro:stationary}
		Suppose that $(B1)$-$(B4)$ hold.
		Suppose further that either of the following conditions holds.
		\begin{enumerate}
			\item [$(B5)$] There exists $K' < \infty$ such that, for $(x,y) \in \X \times \X$, $W_{\varrho}(\delta_x P, \delta_y P) \leq K' \varrho(x,y)$.
			\item [$(B6)$] $P$ is
			Feller.
		\end{enumerate}
		Then~$\varpi$, as defined in Proposition~\ref{pro:limiting}, is
                  the unique stationary distribution of~$P$.
	\end{proposition}
	
	\begin{remark}
		Note that if $(B2)$ holds with a bounded function~$\Gamma$ and $\psi = \varrho$, then $(B4)$ and $(B5)$ hold.
	\end{remark}

	Before going on to the next section, which contains the proofs of Theorem~\ref{thm:main} and subsequent propositions, we  compare Theorem~\ref{thm:main} with Corollary~\ref{cor:geometric}, a result based on standard versions of drift and contraction.
	As we will demonstrate in Section~\ref{sec:proof}, the latter is essentially an application of the former when~$\Lambda$ and~$\Gamma$ are constant over a coupling set,~$C$, as well as over its complement, $(\X \times \X)\setminus C$.
	To make a comparison between the two results, consider the following scenario.
	Suppose that~$P$ satisfies $(A1)$ with a drift function $V: \X \to [0,\infty)$, $a \in [0,\infty)$, $\eta \in [0,1)$, and $L \in [0,\infty)$.
	Then~$P$ satisfies $(B1)$ with the same~$V$ and~$a$.
	Suppose that~$P$ also satisfies $(B2)$ with $\Gamma: \X \times \X \to [0,\infty)$.
	Let
	\[
	\Lambda(x,y) = \frac{\eta V(x) + \eta V(y) + 2L + 1}{V(x) + V(y) + 1} \geq \frac{PV(x) + PV(y) + 1}{V(x) + V(y) + 1}
	\]
	for $(x,y) \in \X \times \X$.
	Assume further that $(B3)$ holds and that~$P$ has a stationary distribution~$\varpi$.
	We know from Remark~\ref{rem:piV} that $\varpi V < \infty$.
	In accordance with Theorem~\ref{thm:main}, for $r \in (0,1)$, let
	\[
	\rho_r^B = \sup_{(x,y) \in \X\times \X} \Gamma(x,y)^r \Lambda(x,y)^{1-r}\,.
	\]
	Whenever $\rho_r^B < 1$, this is a nontrivial upper bound on the chain's geometric convergence rate.
	Here, we use the superscript ``$B$" to indicate that $\rho_r$ is constructed based on $(B1)$-$(B3)$.
	
	Suppose now that we are ignorant of Theorem~\ref{thm:main}, and wish to find a convergence bound using the more standard Corollary~\ref{cor:geometric}.
	We need to convert $(B2)$ to $(A2)$ by letting $\gamma = \sup_{(x,y) \in C} \Gamma(x,y)$, and $K = \sup_{(x,y) \not\in C} \Gamma(x,y)$, where $C = \{(x,y): V(x)+V(y)<d\}$ is a coupling set, and $d > 2L/(1-\eta)$.
	Of course, we need to further assume that $\gamma < 1$ and $K < \infty$, so that $(A2)$ is satisfied.
	For $r \in (0,1)$,
	\[
	\begin{aligned}
	\rho_r^A &:= 
	\left[ \gamma^r (2L+1)^{1-r} \right] \vee \left[ K^r \left(
	\frac{\eta d + 2L + 1}{d + 1} \right)^{1-r} \right] \\
	&\geq
	\left\{ \left[ \sup_{(x,y) \in C} \Gamma(x,y)^r \right] \left[ \sup_{(x,y) \in C} \Lambda(x,y)^{1-r} \right] \right\} \vee \left\{ \left[ \sup_{(x,y) \not\in C} \Gamma(x,y)^r \right] \left[ \sup_{(x,y) \not\in C} \Lambda(x,y)^{1-r} \right] \right\} \,.
	\end{aligned}
	\]
	When $(A3)$ and~\eqref{ine:r} hold, $\rho_r^A < 1$, and $\rho_r^A$ is an upper bound on the chain's convergence rate constructed via Corollary~\ref{cor:geometric}.
	On the other hand, note that
	\[
	\rho_r^B = \left[ \sup_{(x,y) \in C} \Gamma(x,y)^r  \Lambda(x,y)^{1-r} \right]  \vee \left[ \sup_{(x,y) \not\in C} \Gamma(x,y)^r  \Lambda(x,y)^{1-r} \right] \,.
	\]
	It's clear that $\rho_r^B \le \rho_r^A$ for each $r \in (0,1)$.
	So $\rho_r^A < 1$ only if $\rho_r^B < 1$.
	Moreover, the inequality between $\rho_r^A$ and $\rho_r^B$ will
	be strict unless~$\Gamma$ and~$\Lambda$ are related in a very specific
	manner that seems unlikely to hold in practice.
	Thus, Theorem~\ref{thm:main} provides a sharper convergence rate bound than Corollary~\ref{cor:geometric}.
	Such improvement is illustrated in Section~\ref{sec:example}, where we study the behavior of a perturbed autoregressive Markov chain and a Gibbs chain for a random effects model.

	\section{Proofs} \label{sec:proof}

	We first introduce the notion of Markovian coupling kernels.
	Recall that $(\X,\varrho)$ is a Polish metric space,~$\mathcal{B}$ is the associated Borel algebra, and~$\psi$ is a lower semi-continuous cost function.
        Suppose that $P_1$ and $P_2$ are Mtks on $(\X, \mathcal{B})$.
        We say that $\tilde{P}: (\X \times \X) \times (\mathcal{B}
        \times \mathcal{B}) \to [0,1]$ is a (Markovian) coupling
        kernel of $P_1$ and $P_2$ if~$\tilde{P}$ is an Mtk such that
        for each $(x,y) \in \X \times \X$, $\delta_{(x,y)}\tilde{P}$ is in
        $\mathcal{C}(\delta_x P_1, \delta_y P_2)$.  In the special
        case that $P_1 = P_2 = P$, we simply say that~$\tilde{P}$ is a
        coupling kernel of~$P$.  It's obvious that $(B2)$ holds if
        there exists a coupling kernel of~$P$, denoted by~$\tilde{P}$,
        such that
	\begin{equation} \label{ine:strongB2}
	\tilde{P}\psi(x,y) \leq \Gamma(x,y) \, \psi(x,y)
	\end{equation}
	for each $(x,y) \in \X \times \X$.  The following lemma, which
        is a direct corollary of Theorem 1.1 in
        \cite{zhang2000existence}, shows that these two conditions
        are equivalent \citep[see also][Theorem 4.4.3]{kulik2017ergodic}.
	\begin{lemma} \citep[][]{zhang2000existence}
          \label{lem:kernel}
		Suppose that $P_1$ and $P_2$ are Mtks on $(\X, \mathcal{B})$.  Then there
                exists a coupling kernel of $P_1$ and $P_2$, denoted
                by~$\tilde{P}$, such that for each $(x,y) \in \X
                \times \X$,
		\[
		W_{\psi}(\delta_x P_1, \delta_y P_2) = \tilde{P} \psi(x,y) \,.
		\]
	\end{lemma}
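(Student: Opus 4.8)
The plan is to deduce this from the existence theory for \emph{optimal Markovian couplings} in Theorem~1.1 of \cite{zhang2000existence}, so the bulk of the argument is a citation together with an explanation of how that result specializes. First I would recall the pointwise picture: for a fixed pair $(x,y)$, the measures $\delta_x P_1$ and $\delta_y P_2$ are Borel probability measures on the Polish space $(\X,\psi)$, and $\psi$, being the metric that generates the topology, is nonnegative and lower semicontinuous. Hence, by the classical existence theorem for optimal transport \citep[see, e.g.,][Theorem~4.1]{villani2008optimal}, the infimum defining $W_{\psi}(\delta_x P_1, \delta_y P_2)$ is attained by some $\upsilon_{x,y} \in \mathcal{C}(\delta_x P_1, \delta_y P_2)$. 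The only thing beyond this that the lemma asserts is that such optimizers can be chosen to depend \emph{measurably} on $(x,y)$, so that $\tilde{P}(x,y,\cdot) := \upsilon_{x,y}(\cdot)$ is a genuine Mtk; Zhang's theorem supplies exactly a coupling kernel $\tilde{P}$ of $P_1$ and $P_2$ for which each $\delta_{(x,y)}\tilde{P}$ is an optimal coupling, and unwinding "optimal" gives $\tilde{P}\psi(x,y) = \int_{\X\times\X}\psi\,\df(\delta_{(x,y)}\tilde{P}) = W_{\psi}(\delta_x P_1,\delta_y P_2)$, which is the claim.

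To make the mechanism explicit (i.e., to indicate what a self-contained proof would do), I would factor the construction through $\mathcal{P}(\X)$ endowed with the topology of weak convergence, under which it is itself a Polish space. The map $(x,y) \mapsto (\delta_x P_1, \delta_y P_2)$ from $\X\times\X$ into $\mathcal{P}(\X)\times\mathcal{P}(\X)$ is measurable because $P_1$ and $P_2$ are Mtks. Over $\mathcal{P}(\X)\times\mathcal{P}(\X)$ one then considers the set-valued map sending $(\mu,\nu)$ to the set of optimal couplings $\{\upsilon \in \mathcal{C}(\mu,\nu) : \int_{\X\times\X}\psi\,\df\upsilon = W_{\psi}(\mu,\nu)\}$. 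Using that $\mathcal{C}(\mu,\nu)$ is weakly compact (tightness of the singletons $\{\mu\}$, $\{\nu\}$ forces tightness of their couplings, and $\mathcal{C}(\mu,\nu)$ is weakly closed), that $\upsilon \mapsto \int\psi\,\df\upsilon$ is weakly lower semicontinuous since $\psi \geq 0$ is lower semicontinuous, and that $(\mu,\nu)\mapsto W_{\psi}(\mu,\nu)$ is Borel, one checks this multifunction is nonempty- and closed-valued with measurable graph, whence the Kuratowski--Ryll-Nardzewski selection theorem yields a measurable selector $s:\mathcal{P}(\X)\times\mathcal{P}(\X)\to\mathcal{P}(\X\times\X)$. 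Setting $\tilde{P}(x,y,\cdot) = s(\delta_x P_1,\delta_y P_2)(\cdot)$ produces a map that is measurable in $(x,y)$, a probability measure for each $(x,y)$, with marginals $\delta_x P_1$ and $\delta_y P_2$ by construction; that is, a coupling kernel realizing the optimal cost.

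Two points warrant care. First, on pairs $(x,y)$ with $W_{\psi}(\delta_x P_1,\delta_y P_2) = +\infty$ the identity is vacuous, but one still needs a measurable coupling kernel there; this is harmless, since the product kernel $(x,y)\mapsto(\delta_x P_1)\otimes(\delta_y P_2)$ is always an Mtk and realizes the (infinite) transport cost, and Zhang's construction absorbs this case as well. Second --- and this is the genuinely technical ingredient, hence the main obstacle --- one must verify that the optimal-coupling multifunction has measurable graph, equivalently that $(\mu,\nu)\mapsto W_{\psi}(\mu,\nu)$ is Borel on $\mathcal{P}(\X)\times\mathcal{P}(\X)$ and that the constraint set $\mathcal{C}(\mu,\nu)$ varies measurably. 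Establishing this measurable-selection machinery for optimal couplings is precisely the content of \cite{zhang2000existence}. Since the present statement is quoted verbatim as a corollary of that work, the proof here is simply to invoke \citet[Theorem~1.1]{zhang2000existence}.
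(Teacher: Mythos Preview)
Your proposal is correct and matches the paper's approach exactly: the paper states this lemma as ``a direct corollary of Theorem~1.1 in \cite{zhang2000existence}'' and gives no further argument, so your reduction to Zhang's optimal Markovian coupling theorem (together with the explanatory measurable-selection sketch) is precisely what is intended. Your additional discussion of the selection mechanism and the infinite-cost case is helpful commentary but not required by the paper.
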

	
	It is well-known that, for any $\mu, \nu \in \mathcal{P}(\X)$, there
        exists $\upsilon \in \mathcal{C}(\mu,\nu)$ such that
        $W_{\psi}(\mu, \nu) = \upsilon \psi$ \cite[see, e.g.,][Theorem
          4.1]{villani2008optimal}.  However, taking $\mu = \delta_x
        P_1$ and $\nu = \delta_y P_2$ does \textit{not} trivially
        yield Lemma~\ref{lem:kernel}.  Indeed, the key feature of
        Lemma~\ref{lem:kernel} is that~$\tilde{P}$ is a \textit{bona
          fide} Mtk.
      This is important in
      our analysis as it protects us from potential measurability problems.
      An important consequence of
        Lemma~\ref{lem:kernel} is the convexity of $W_{\psi}$.
	
	\begin{lemma} \label{lem:convex}
		Suppose that $P_1$ and $P_2$ are Mtks on $(\X, \mathcal{B})$.
		Let $\mu, \nu \in \mathcal{P}(\X)$.
		Then, for $\upsilon \in \mathcal{C}(\mu, \nu)$,
		\begin{equation} \label{ine:convex1}
		W_{\psi}(\mu P_1, \nu P_2) \leq \int_{\X \times \X} W_{\psi}(\delta_x P_1, \delta_y P_2) \, \upsilon( \df x, \df y) \,.
		\end{equation}
		Moreover,
		\begin{equation} \nonumber
		W_{\psi}(\mu P_1, \nu) \leq \int_{\X} W_{\psi}(\delta_x P_1, \nu) \, \mu(\df x) \,.
		\end{equation}
	\end{lemma}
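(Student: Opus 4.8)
The plan is to deduce both inequalities from Lemma~\ref{lem:kernel}, whose whole point is that it produces an honest Markovian coupling \emph{kernel} rather than merely a family of coupling measures. First I would apply Lemma~\ref{lem:kernel} to $P_1$ and $P_2$ to obtain a coupling kernel $\tilde{P}$ on $(\X \times \X, \mathcal{B} \times \mathcal{B})$ with $\tilde{P}\psi(x,y) = W_{\psi}(\delta_x P_1, \delta_y P_2)$ for every $(x,y) \in \X \times \X$. Since $\tilde{P}$ is an Mtk and $\psi \geq 0$ is measurable, the map $(x,y) \mapsto \tilde{P}\psi(x,y)$ is measurable, which in particular shows that the right-hand side of \eqref{ine:convex1} is a well-defined integral of a nonnegative function.

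Next, fix $\upsilon \in \mathcal{C}(\mu, \nu)$ and form the probability measure $\upsilon\tilde{P}$ on $\X \times \X$. The key step is to check that $\upsilon\tilde{P} \in \mathcal{C}(\mu P_1, \nu P_2)$. For $A_1 \in \mathcal{B}$,
\[
(\upsilon\tilde{P})(A_1 \times \X) = \int_{\X \times \X} \tilde{P}\bigl((x,y), A_1 \times \X\bigr)\, \upsilon(\df x, \df y) = \int_{\X \times \X} P_1(x, A_1)\, \upsilon(\df x, \df y) = \int_{\X} P_1(x, A_1)\, \mu(\df x) = \mu P_1(A_1),
\]
where the second equality uses that $\delta_{(x,y)}\tilde{P} \in \mathcal{C}(\delta_x P_1, \delta_y P_2)$ and the third uses that $\upsilon$ has first marginal $\mu$; the identity $(\upsilon\tilde{P})(\X \times A_2) = \nu P_2(A_2)$ follows symmetrically. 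Using $\upsilon\tilde{P}$ as a (generally non-optimal) coupling in the definition of $W_{\psi}$ and then applying Tonelli's theorem (valid because $\psi$ is nonnegative and measurable),
\[
W_{\psi}(\mu P_1, \nu P_2) \leq \int_{\X \times \X} \psi(u,v)\, (\upsilon\tilde{P})(\df u, \df v) = \int_{\X \times \X} \left( \int_{\X \times \X} \psi(u,v)\, \tilde{P}\bigl((x,y), \df u, \df v\bigr) \right) \upsilon(\df x, \df y) = \int_{\X \times \X} \tilde{P}\psi(x,y)\, \upsilon(\df x, \df y),
\]
and since the final integrand equals $W_{\psi}(\delta_x P_1, \delta_y P_2)$, this is exactly \eqref{ine:convex1}.

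For the second inequality I would apply the first part with $P_2$ replaced by the constant Mtk $Q$ given by $Q(x, \cdot) = \nu(\cdot)$; this is trivially measurable in $x$, so Lemma~\ref{lem:kernel} applies on the Polish space $(\X, \psi, \mathcal{B})$, and moreover $\delta_x Q = \nu$ for every $x$ while $\mu Q = \nu$. Taking $\upsilon$ to be the diagonal coupling of $\mu$ with itself, $\upsilon(C) = \mu(\{x : (x,x) \in C\}) \in \mathcal{C}(\mu, \mu)$, inequality \eqref{ine:convex1} becomes
\[
W_{\psi}(\mu P_1, \nu) = W_{\psi}(\mu P_1, \mu Q) \leq \int_{\X} W_{\psi}(\delta_x P_1, \delta_x Q)\, \mu(\df x) = \int_{\X} W_{\psi}(\delta_x P_1, \nu)\, \mu(\df x),
\]
which is the claimed bound.

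I do not anticipate a genuine obstacle: the substantive input is Lemma~\ref{lem:kernel}, which guarantees that $\tilde{P}$ is a bona fide Mtk so that $\upsilon\tilde{P}$ is a legitimate measure and the marginal computation is meaningful, together with Tonelli's theorem. The only points needing a little care are verifying that $\upsilon\tilde{P}$ indeed has marginals $\mu P_1$ and $\nu P_2$, and checking the measurability that makes the right-hand integrals well-defined; both are routine once $\tilde{P}$ is in hand.
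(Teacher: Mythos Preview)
Your proposal is correct and follows essentially the same approach as the paper: the first inequality is exactly the standard argument via Lemma~\ref{lem:kernel} that the paper merely cites (Villani, Lemma~4.8), and for the second inequality both you and the paper specialize the first part by taking $P_2$ to be the constant kernel $Q(x,\cdot)=\nu(\cdot)$. The only cosmetic difference is the choice of coupling in that specialization: the paper uses the product coupling $\upsilon=\mu\otimes\nu\in\mathcal{C}(\mu,\nu)$, whereas you use the diagonal coupling of $\mu$ with itself in $\mathcal{C}(\mu,\mu)$; since $\delta_yQ=\nu$ for every $y$, both choices collapse to the same one-variable integral.
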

	\begin{proof}
%		{\it We may refer to \cite{durmus2015quantitative}, Lemma~1 and omit proof of the first part.}
%		We first show~\eqref{ine:convex1}.
%		By Lemma~\ref{lem:kernel}, there exists a coupling kernel of $P_1$ and $P_2$, denoted by~$\tilde{P}$, such that for each $x,y \in \X$,
%		\[
%		W_{\psi}(\delta_x P_1, \delta_y P_2) = \tilde{P} \psi(x,y) \,.
%		\]
%		This implies that $W_{\psi}(\delta_x P_1, \delta_y P_2)$, as a function of $(x,y)$, is measurable.
%		Let $\upsilon \in \mathcal{C}(\mu, \nu)$, then $\upsilon \tilde{P} \in \mathcal{C}(\mu P, \nu P)$.
%		As a result,
%		\[
%		W_{\psi}(\mu P_1, \nu P_2) \leq \upsilon \tilde{P} \psi = \int_{\X \times \X} W_{\psi}(\delta_x P_1, \delta_y P_2) \, \upsilon( \df x, \df y) \,.
%		\]
		Establishing~\eqref{ine:convex1} using
                Lemma~\ref{lem:kernel} is standard.  See, e.g.,
                \cite{villani2008optimal}, Theorem 4.8.

		Let $P_2$ in~\eqref{ine:convex1} be such that
                $P_2(x,\cdot) = \nu(\cdot)$ for each $x \in \X$, and
                let $\upsilon(\df x, \df y) = \mu(\df x) \nu(\df
                y)$. Then
		\[
		W_{\psi}(\mu P_1, \nu) = W_{\psi}(\mu P_1, \nu P_2)
                \leq \int_{\X \times \X} W_{\psi}(\delta_x P_1, \nu)
                \, \mu(\df x) \nu(\df y) = \int_{\X} W_{\psi}(\delta_x
                P_1, \nu) \, \mu(\df x) \,.
		\]
	\end{proof}
	
%	The next result states that $(B1)$ can be used to construct a bivariate drift condition of a general form.
%	\begin{lemma} \label{lem:bivariate}
%		Suppose that~$P$ is an Mtk on $(\X, \mathcal{B})$, and let~$\tilde{P}$ be a coupling kernel of~$P$.
%		Suppose that $(B1)$ holds.
%		Let $h(x,y) = V(x) + V(y) + 1 \,, x,y \in \X$.
%		Then for each $x,y \in \X$,
%		\[
%		\tilde{P}h(x,y) \leq \frac{PV(x) + PV(y) + 1}{V(x) + V(y) + 1} h(x,y) \,.
%		\]
%	\end{lemma}
%	\begin{proof}
%		Trivial.
%	\end{proof}
	
	The following lemma describes a way of constructing a potential contraction condition based on~\eqref{ine:strongB2} and a ``bivariate"
        drift condition.
	\begin{lemma} \label{lem:strongcontract}
		Suppose that~$P$ is an Mtk on $(\X, \mathcal{B})$ that
                admits a coupling kernel~$\tilde{P}$.  Suppose further
                that there exist measurable functions $h: \X \times \X
                \to [0,\infty)$, $\Lambda: \X \times \X \to
                  [0,\infty)$, and $\Gamma: \X \times \X \to
                    [0,\infty)$ such that for each $(x,y) \in \X
                      \times \X$,
		\[
		\tilde{P} h(x,y) \leq \Lambda(x,y)
                h(x,y) \hspace*{4mm} \mbox{and} \hspace*{4mm}
                \tilde{P} \psi(x,y) \leq \Gamma(x,y) \psi(x,y) \,.
		\]
		For each $r \in (0,1)$, define $\psi_r: \X \times \X
                \rightarrow [0,\infty)$ by $\psi_r(x,y) = \psi(x,y)^r
                  h(x,y)^{1-r}$, and set
                \[
                \rho_r = \sup_{(x,y) \in \X \times \X} \Gamma(x,y)^r
                \Lambda(x,y)^{1-r} \,.
		\]
		Then for every $(x,y) \in \X \times \X$ and $r \in (0,1)$,
		\[
		\tilde{P}\psi_r(x,y) \leq \rho_r \psi_r(x,y) \;.
		\]
	\end{lemma}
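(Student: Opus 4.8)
The proof is a pointwise application of Hölder's inequality to the probability measure $\tilde{P}\big((x,y),\cdot\,\big)$ on $\X \times \X$. Fix $(x,y) \in \X \times \X$ and $r \in (0,1)$. Since $\psi$ and $h$ are measurable and nonnegative, so is $\psi_r = \psi^r h^{1-r}$, and
\[
\tilde{P}\psi_r(x,y) = \int_{\X \times \X} \psi(x',y')^r \, h(x',y')^{1-r} \; \tilde{P}\big((x,y), \df(x',y')\big) \,.
\]
Apply Hölder's inequality with conjugate exponents $1/r$ and $1/(1-r)$ (here I use that $\tilde{P}((x,y),\cdot)$ has total mass one, so there is no issue with the multiplicative constant):
\[
\tilde{P}\psi_r(x,y) \leq \left( \int_{\X \times \X} \psi \, \df \tilde{P}((x,y),\cdot) \right)^{\!r} \left( \int_{\X \times \X} h \, \df \tilde{P}((x,y),\cdot) \right)^{\!1-r} = \big(\tilde{P}\psi(x,y)\big)^r \big(\tilde{P}h(x,y)\big)^{1-r} \,.
\]
Both factors on the right are finite because $\tilde{P}\psi(x,y) \leq \Gamma(x,y)\psi(x,y) < \infty$ and $\tilde{P}h(x,y) \leq \Lambda(x,y) h(x,y) < \infty$ by hypothesis and finiteness of $\Gamma, \psi, \Lambda, h$.

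\textbf{Concluding the estimate.} Plugging the two assumed bounds $\tilde{P}\psi(x,y) \leq \Gamma(x,y)\psi(x,y)$ and $\tilde{P}h(x,y) \leq \Lambda(x,y)h(x,y)$ into the previous display gives
\[
\tilde{P}\psi_r(x,y) \leq \big(\Gamma(x,y)\psi(x,y)\big)^r \big(\Lambda(x,y)h(x,y)\big)^{1-r} = \Gamma(x,y)^r \Lambda(x,y)^{1-r} \, \psi_r(x,y) \leq \rho_r \, \psi_r(x,y) \,,
\]
where the last inequality is just the definition of $\rho_r$ as the supremum of $\Gamma^r \Lambda^{1-r}$ over $\X \times \X$. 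This is the claimed inequality.

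\textbf{The one point needing care: degeneracy of $\psi_r(x,y)$.} The computation above is only informative when $\psi_r(x,y) > 0$; I should check the inequality still holds when $\psi_r(x,y) = 0$, i.e.\ when $\psi(x,y) = 0$ or $h(x,y) = 0$, since then the right-hand side is $0$ and I must show $\tilde{P}\psi_r(x,y) = 0$. If $\psi(x,y) = 0$, then $\tilde{P}\psi(x,y) \leq \Gamma(x,y) \cdot 0 = 0$, and since $\psi \geq 0$ this forces $\psi = 0$ almost surely under $\tilde{P}((x,y),\cdot)$; as $r > 0$, it follows that $\psi^r h^{1-r} = 0$ a.s., hence $\tilde{P}\psi_r(x,y) = 0$. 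Symmetrically, if $h(x,y) = 0$, then $\tilde{P}h(x,y) = 0$, so $h = 0$ a.s., and since $1 - r > 0$ we again get $\psi^r h^{1-r} = 0$ a.s., so $\tilde{P}\psi_r(x,y) = 0$. In either case the asserted bound holds trivially. (If instead $\rho_r = \infty$ the inequality is vacuous, so no separate argument is needed there.) Since $(x,y)$ and $r$ were arbitrary, this completes the proof. No genuine obstacle arises; the only thing to be vigilant about is that Hölder is being applied to a genuine probability measure — which is exactly why Lemma~\ref{lem:kernel} was invoked to guarantee $\tilde{P}$ is a bona fide Mtk rather than just a measurable family of couplings.
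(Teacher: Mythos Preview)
Your proof is correct and follows exactly the paper's approach: apply H\"older's inequality with exponents $1/r$ and $1/(1-r)$ to the probability measure $\tilde{P}((x,y),\cdot)$, then insert the assumed bounds on $\tilde{P}\psi$ and $\tilde{P}h$, and finally take the supremum. Your additional discussion of the degenerate case $\psi_r(x,y)=0$ is more careful than the paper's one-line proof, but not a different idea.
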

	\begin{proof}
		By H\"{o}lder's inequality, for each $r \in (0,1)$ and
                $(x,y) \in \X \times \X$,
		\[
		\tilde{P}\psi_r(x,y) \leq [\tilde{P}\psi(x,y)]^r
                      [\tilde{P}h(x,y)]^{1-r} \leq \Gamma(x,y)^r
                      \Lambda(x,y)^{1-r} \psi_r(x,y) \leq \rho_r
                      \psi_r(x,y) \,.
		\]
	\end{proof}
%	\begin{remark}
%                The fact that $\psi$ is a metric was not used in the
%                proof of Lemma~\ref{lem:strongcontract}.  Thus, the
%                result actually holds for any~$\psi$ that is a
%                non-negative measurable function.
%                In general, it's possible to extend many of the results in this article to the case where $W_{\psi}$ is not induced by a proper metric, although we will not pursue this type of extension for the sake of simplicity.
%	\end{remark}

        In the proof of Lemma~\ref{lem:strongcontract},
        we use H\"{o}lder's inequality to establish a new
        contraction condition.
        \citet{hairer2011asymptotic}, \citet{butkovsky2014subgeometric}, and \citet{douc2018markov}
         make similar use of the inequality.

	We are now ready to prove the main results, namely Theorem~\ref{thm:main} and Propositions~\ref{pro:limiting} and~\ref{pro:stationary}.

	\begin{proof}[Proof of Theorem~\ref{thm:main}]
		By $(B2)$ and Lemma~\ref{lem:kernel}, there exists a
                coupling kernel of~$P$, denoted by~$\tilde{P}$, such
                that for each $(x,y) \in \X \times \X$,
		\[
		\tilde{P} \psi(x,y) \leq \Gamma(x,y) \psi(x,y) \,.
		\]
	        Define $h: \X \times \X \rightarrow [1,\infty)$ as
                  $h(x,y) = V(x) + V(y) + 1$.  For each $(x,y) \in \X
                  \times \X$,
		\[
		\tilde{P} h(x,y) \leq \Lambda(x,y) h(x,y) \,.
		\]
		As in Lemma~\ref{lem:strongcontract}, for $r \in (0,1)$, let $\psi_r: \X \times \X \to [0,\infty)$ be such that
		$
		\psi_r(x,y) = \psi(x,y)^r h(x,y)^{1-r} 
		$.
		Let~$r$ satisfy~\eqref{ine:r2} so that $\rho_r < 1$.  It
                follows from~\eqref{ine:Vpsi} and
                Lemma~\ref{lem:strongcontract} that for each $(x,y)
                \in \X \times \X$ and $n \in \mathbb{Z}_+$,
		\begin{equation} \nonumber
		W_{\psi}(\delta_x P^n, \delta_y P^n) \leq \tilde{P}^n
                \psi(x,y) \leq a^{1-r} \tilde{P}^n \psi_r(x,y) \leq
                a^{1-r} \psi_r(x,y) \rho_r^n \leq a h(x,y) \rho_r^n
                \,.
		\end{equation}
		By Lemma~\ref{lem:convex}, for $\mu,\nu \in \mathcal{P}(\X)$,
		\[
			W_{\psi}(\mu P^n, \nu P^n) \leq a \int_{\X \times \Y} h(x,y) \mu(\df x) \nu(\df y) \rho_r^n = a(\mu V + \nu V + 1) \rho_r^n, \quad n \in \mathbb{Z}_+.
		\]
	\end{proof}

%	We proceed by proving the two propositions on the existence and uniqueness of stationary distributions.
	
	\begin{proof} [Proof of Proposition~\ref{pro:limiting}]
		Fix $x \in \X$.
		By Theorem~\ref{thm:main},
		\begin{equation} \nonumber
		\begin{aligned}
			W_{\psi}(\delta_x P^n, \delta_x P^{n+1}) \leq a (PV(x) + V(x) + 1) \rho_r^n \,.
		\end{aligned}
		\end{equation}
		By (B4), 
		\begin{equation} \label{ine:increment}
			W_{\varrho,p}(\delta_x P^n, \delta_x P^{n+1}) \leq W_{\psi}(\delta_x P^n, \delta_x P^{n+1})^{1/p} \leq  a^{1/p} [PV(x) + V(x) + 1]^{1/p} \rho_r^{n/p} \,.
		\end{equation}
		By the triangle inequality,
                for each positive integer~$n$,
		\[
		\left( \int_{\X} \varrho(x,y)^p P^n(x, \df y) \right)^{1/p} = W_{\varrho,p}(\delta_x, \delta_x P^n) \leq \sum_{k=0}^{n-1} W_{\varrho,p}(\delta_x P^k, \delta_x P^{k+1}) < \infty \,. 
		\]
		Thus, $\delta_x P^n \in \mathcal{P}_{\varrho,p}(\X)$ for each $n \in \mathbb{Z}_+$.
		Moreover,~\eqref{ine:increment} shows that
		\begin{equation} \label{ine:tailsum}
		\sum_{k=n}^{\infty} W_{\varrho,p}(\delta_x P^k, \delta_x P^{k+1}) \leq a^{1/p} [PV(x)+V(x)+1]^{1/p} \frac{\rho_r^{n/p}}{1 - \rho_r^{1/p}}   < \infty \,.
		\end{equation}
		This means that $(\delta_x P^n)_{n \in \mathbb{Z}_+}$ is Cauchy in $(\mathcal{P}_{\varrho,p}(\X), W_{\varrho,p})$.
		Recall that $(\mathcal{P}_{\varrho,p}(\X), W_{\varrho,p})$ is Polish, and thus, complete.
		Hence, there exists $\varpi_x \in \mathcal{P}_{\varrho,p}(\X)$ such that 
		$
		\lim\limits_{n \to \infty} W_{\varrho,p}(\delta_x P^n, \varpi_x) = 0
		$.
		Note that $\varpi_x$ does not depend on~$x$.  To see
                this, let $y \in \X$.  Then by $(B4)$ and Theorem~\ref{thm:main},
		\[
		\lim\limits_{n \to \infty} W_{\varrho,p}(\delta_y P^n, \delta_x P^n)  \leq \lim\limits_{n \to \infty} W_{\psi}(\delta_y P^n, \delta_x P^n)^{1/p} = 0\,.
		\]
		Thus, $\varpi_x = \varpi_y$ for each $(x,y) \in \X \times
                \X$.  We will simply denote $\varpi_x$ by~$\varpi$.
                By~\eqref{ine:tailsum},
        \[
        \begin{aligned}
        	W_{\varrho,p}(\delta_x P^n, \varpi) &\leq \lim\limits_{m \to \infty} \left[ \sum_{k=n}^{m} W_{\varrho,p}(\delta_x P^k, \delta_x P^{k+1}) + W_{\varrho,p}(\delta_x P^{m+1}, \varpi) \right] \\
        	& \leq a^{1/p} [PV(x)+V(x)+1]^{1/p} \frac{\rho_r^{n/p}}{1 - \rho_r^{1/p}} \,.
        \end{aligned}
        \]
%		\[
%		W_{\varrho,p}(\delta_x P^n, \varpi) \leq \sum_{k=n}^{\infty} W_{\varrho,p}(\delta_x P^k, \delta_x P^{k+1}) \leq a^{1/p} [PV(x)+V(x)+1]^{1/p} \frac{\rho_r^{n/p}}{1 - \rho_r^{1/p}} \,.
%		\]
		This gives~\eqref{ine:geometric}.

		We now show that~$P$ has at most one stationary distribution.
		Let $\bar{\varrho} = \varrho \wedge 1$, so that~$\bar{\varrho}$ is a bounded metric that is topologically equivalent to~$\varrho$.
		It follows from~\eqref{ine:geometric} that, for $x \in \X$,
		\[
		W_{\bar{\varrho},p}(\delta_x P^n, \varpi) \leq \left[ a^{1/p} [PV(x)+V(x)+1]^{1/p} \frac{\rho_r^{n/p}}{1 - \rho_r^{1/p}} \right] \wedge 1.
		\]
		Let~$\mu$ be a stationary distribution.
		By Lemma~\ref{lem:convex} (with $\bar{\varrho}(\cdot,\cdot)^p$ as the cost function) and dominated convergence,
                \[
                W_{\bar{\varrho},p}(\mu,\varpi)^p \leq  \lim\limits_{n \to \infty} \int_{\X} W_{\bar{\varrho},p}(\delta_x P^n,\varpi)^p \, \mu(\df x) = \int_{\X} \lim\limits_{n \to \infty} W_{\bar{\varrho},p}(\delta_x P^n,\varpi)^p \, \mu(\df x) = 0.
                \]
        This shows that $\mu = \varpi$. 
	\end{proof}

	\begin{proof} [Proof of Proposition~\ref{pro:stationary}]
		It suffices to show that~$\varpi$ is stationary.

		Assume
                first that $(B5)$ holds.
		It is well-known that $W_{\varrho} = W_{\varrho,1} \leq W_{\varrho,p}$, where $p \geq 1$ is given in $(B4)$ \citep[see, e.g.,][Remark 6.6]{villani2008optimal}.
		Therefore, by Proposition~\ref{pro:limiting},
		\begin{equation} \label{eq:barpsiconverge}
		\lim\limits_{n \to \infty} W_{\varrho}(\delta_x P^n, \varpi) = 0 \,, \quad x \in \X\,.
		\end{equation}
		Fix $x_0 \in \X$.
		For $n \in \mathbb{Z}_+$, let $\upsilon_n \in \mathcal{C}(\delta_{x_0} P^n, \varpi)$ be such that $\upsilon_n \varrho = W_{\varrho}(\delta_{x_0} P^n, \varpi)$.
		For each $n \in \mathbb{Z}_+$, by $(B5)$ and Lemma~\ref{lem:convex},
		\[
		\begin{aligned}
		W_{\varrho}(\varpi, \varpi P) &\leq W_{\varrho}(\varpi, \delta_{x_0} P^{n+1}) + W_{\varrho}(\delta_{x_0} P^{n+1}, \varpi P) \\
		&\leq W_{\varrho}(\varpi, \delta_{x_0} P^{n+1}) + \int_{\X \times \X} W_{\varrho}(\delta_x P, \delta_y P) \,\upsilon_n(\df x, \df y) \\
		& \leq W_{\varrho}(\varpi, \delta_{x_0} P^{n+1}) + K' W_{\varrho}(\delta_{x_0} P^n, \varpi) \,.
		\end{aligned}
		\]
		By~\eqref{eq:barpsiconverge}, letting $n \to \infty$ yields $\varpi = \varpi P$.
		
		Suppose alternatively that~$P$ is Feller.  We know from the proof of
                Proposition~\ref{pro:limiting} that, for any $x \in \X$,
                $(\delta_x P^n)_n$ converges to~$\varpi$ in $(\mathcal{P}_{\varrho,p}(\X),W_{\varrho,p})$,
                which implies that $( \delta_x P^n )_n$ converges
                weakly to~$\varpi$ \cite[see, e.g.,][Theorem
                  6.9]{villani2008optimal}.  Let $f \in C_b(\X)$ be
                arbitrary.  Since~$P$ is Feller, $Pf \in C_b(\X)$, and by weak convergence
                $\lim\limits_{n \to \infty} \delta_x P^n (Pf) = \varpi
                (Pf)$.  It follows that $(\delta_x P^n)_n$ converges
                weakly to $\varpi P$ as well.  This is enough to ensure
                that $\varpi = \varpi P$ \cite[see, e.g.,][Theorem
                  1.2]{billingsley1999convergence}.
	\end{proof}
	
	As promised, we now derive Corollary~\ref{cor:geometric} based on the main theorem.
	\begin{proof}[Proof of Corollary~\ref{cor:geometric}]
		By $(A1)$, condition $(B1)$ holds with the same $V: \X
                \to [0,\infty)$ and $a \in [0,\infty)$.  Recall that $d > 2L/(1-\eta)$, and
                  $C = \big \{ (x,y) \in \X \times \X: V(x) + V(y) < d
                  \big \}$.  By $(A2)$, condition $(B2)$ is satisfied
                  with
		\[
		\Gamma(x,y) = \gamma 1_{(x,y) \in C} + K 1_{(x,y)
                  \not\in C} \,, \quad (x,y) \in \X \times \X\,.
		\]
		Again by $(A1)$, for each $(x,y) \in \X \times \X$,
		\[
		\begin{aligned}
		\frac{PV(x) + PV(y) + 1}{V(x) + V(y) + 1} 
		& \leq \eta + \frac{2L-\eta+1}{V(x) + V(y) + 1} \\
		& \leq \begin{cases}
		2L+1 \,, & (x,y) \in C \,,\\
		\lambda \,, & (x,y) \not\in C \,,
		\end{cases}
		\end{aligned}
		\]
		where $\lambda = (\eta d + 2L + 1)/(d + 1) < 1$.
		Let
		\[
		\Lambda(x,y) = (2L+1) 1_{(x,y) \in C} + \lambda 1_{(x,y) \not\in C} \,, \quad (x,y)
                \in \X \times \X \,.
		\]
		Then
		\[
		\Lambda(x,y) \geq \frac{PV(x) + V(x) + 1}{V(x) + V(y) + 1}
		\]
		for each $(x,y) \in \X \times \X$, as in
                Theorem~\ref{thm:main}.
                
		We now establish $(B3)$ using $(A3)$.
		Let~$r$ satisfy~\eqref{ine:r}, that is,
		\[
		\frac{\log (2L+1)}{\log (2L+1) - \log \gamma} < r < \frac{-\log \lambda}{\log K - \log \lambda} 1_{K > 1} + 1_{K \leq 1} \,.
		\]
		Note that
		\[
		 \sup_{(x,y) \in \X \times \X} \Gamma(x,y)^r \Lambda(x,y)^{1-r} = \left[\gamma^r (2L+1)^{1-r} \right] \vee \left(K^r \lambda^{1-r} \right) \,.
		\]
		If $K \leq 1$, then
		\[
		\left[\gamma^r (2L+1)^{1-r} \right] \vee \left(K^r \lambda^{1-r} \right) \leq \left[\left(\frac{\gamma}{2L+1}\right)^r (2L+1)\right] \vee \lambda^{1-r} < 1 \,.
		\] 
		If, on the other hand, $K>1$, then
		\[
		\left[\gamma^r (2L+1)^{1-r} \right] \vee \left(K^r \lambda^{1-r} \right) = \left[\left(\frac{\gamma}{2L+1}\right)^r (2L+1)\right] \vee \left[\left(\frac{K}{\lambda}\right)^r \lambda\right] < 1 \,.
		\]
		Thus, $(B3)$ holds, and for each~$r$ satisfying~\eqref{ine:r}, $\sup_{(x,y) \in \X \times \X} \Gamma(x,y)^r \Lambda(x,y)^{1-r} < 1$.
		By Theorem~\ref{thm:main}, for $\mu, \nu \in \mathcal{P}(\X)$,
		\begin{equation} \nonumber
		W_{\psi} (\mu P^n, \nu P^n) \leq a (\mu V + \nu V + 1) \rho_r^n \,,
		\end{equation}
		where~$r$ satisfies~\eqref{ine:r}, and
		\[
		\rho_r = \sup_{(x,y) \in \X \times \X} \Gamma(x,y)^r \Lambda(x,y)^{1-r} = \left[\gamma^r (2L+1)^{1-r} \right] \vee \left(K^r \lambda^{1-r} \right) .
		\]
		
		Suppose now that~$P$ has a stationary distribution~$\varpi$.
        By $(A1)$ and \pcite{hairer2018ergodic} Proposition 4.24, $\varpi V \leq L/(1-\eta)$.
        It follows that
         \[
         W_{\psi} (\mu P^n, \varpi) \leq a (\mu V + \varpi V + 1) \rho_r^n \leq a \left(\mu V + \frac{L}{1 - \eta} + 1 \right) \rho_r^n \,.
         \]
	\end{proof}
	
	To end the section, we derive Corollary~\ref{cor:continuous}, which is a continuous analogue of Corollary~\ref{cor:geometric}.

	\begin{proof}[Proof of Corollary~\ref{cor:continuous}]
		To show that~$\varpi$ is the unique stationary distribution of $(P^t)_t$, we use an argument from \cite{butkovsky2014subgeometric}.
		Note that for any $s \geq 0$, $\varpi P^s$ is also stationary for $P^{t_*}$, since $\varpi P^s P^{t_*} = \varpi P^{t_*} P^s = \varpi P^s$.
		Because~$\varpi$ is the unique stationary distribution for $P^{t_*}$, $\varpi P^s = \varpi$ for any $s \geq 0$.
		Hence,~$\varpi$ is the unique stationary distribution for $(P^t)_t$.
		
		By Corollary~\ref{cor:geometric}, for any $\mu \in \mathcal{P}(\X)$ and~$r$ satisfying~\eqref{ine:r},
		\begin{equation} \label{ine:geometricstar}
		W_{\psi}(\mu P^{nt_*}, \varpi) \leq a \left( \mu V + \frac{L}{1-\eta} + 1 \right) \rho_r^n \,.
		\end{equation}
		Let $t \geq 0$, and let $s = t - \lfloor t/t_*
                \rfloor t_* \in [0,t_*)$.  Let $\mu \in
                  \mathcal{P}(\X)$, and let
                  $\upsilon \in \mathcal{C}(\mu P^{\lfloor t/t_*
                    \rfloor t_*} , \varpi)$ be such that $\upsilon \psi =
                  W_{\psi}(\mu P^{\lfloor t/t_* \rfloor t_*}, \varpi)$.
                  Applying Lemma~\ref{lem:convex}
                  and~\eqref{ine:smallt} shows that
		\[
		W_{\psi}(\mu P^t, \varpi) = W_{\psi}(\mu P^{\lfloor t/t_* \rfloor t_*} P^s, \varpi P^s) \leq \int_{\X \times \X} W_{\psi}(\delta_x P^s, \delta_y P^s) \, \upsilon(\df x, \df y) \leq b W_{\psi}(\mu P^{\lfloor t/t_* \rfloor t_*}, \varpi)  \,.
		\]
		The result then follows immediately from~\eqref{ine:geometricstar}.
	\end{proof}

	\section{Examples}
        \label{sec:example}
        
    \subsection{Nonlinear autoregressive process}
	
	Let $(\X,\varrho)$ be the real line equipped with the Euclidean distance, and set $\psi = \varrho$. Let~$P$ be the Mtk of the Markov chain,
        $(X_n)_{n=0}^{\infty}$, defined as follows,
	\[
	X_{n+1} = g(X_n) + Z_n,
	\]
	where $g:\X \to \X$, and $(Z_n)_{n=0}^{\infty}$ is a
        sequence of iid standard normal random variables.  See \cite{douc2004practical} (and the
        references therein) for an in-depth discussion of the
        convergence properties of this family of Markov chains.  In
        this subsection, we compare the numerical bounds resulting from
        applications of Corollary~\ref{cor:geometric} and
        Theorem~\ref{thm:main} to a particular member of the family.

	For illustration, set
	\[
	g(x) = \frac{x}{2} - \frac{\sin x}{2} \,.
	\]
	Then $(X_n)_{n=0}^{\infty}$ is a linear autoregressive chain
        perturbed by a trigonometric term.  We first provide a
        convergence rate bound based on Corollary~\ref{cor:geometric}.
        Letting $V(x) = x^2$ for $x \in \X$, we have
	\begin{equation} \label{ine:driftexm}
	PV(x) = \frac{x^2}{4} - \frac{x \sin x}{2} + \frac{(\sin
          x)^2}{4} + 1 \leq \frac{x^2}{2} + \frac{(\sin x)^2}{2} + 1
        \leq \frac{x^2}{2} + \frac{3}{2} \,.
	\end{equation}
	It follows that $(A1)$ holds with $a=1$, $\eta = 1/2$, and $L
        = 3/2$.  Let $d>2L/(1-\eta) = 6$, and set $C = \big \{ (x,y)
        \in \X \times \X: \, x^2 + y^2 < d \big \}$.  For $(x,y) \in
        \X \times \X$, let
	\begin{equation} \label{eq:gammaexm}
	\begin{aligned}
	\Gamma(x,y) & = \frac{|g(x) - g(y)|}{|x-y|} 1_{x \neq y} + |g'(x)| 1_{x = y}\\ 
	&= \frac{1}{2} \left( 1 - \frac{\sin x - \sin y}{x - y} \right) 1_{x \neq y} + \left( \frac{1}{2} - \frac{\cos x}{2} \right) 1_{x = y} \,.
	\end{aligned}
	\end{equation}
	Then $|g(x)-g(y)| = \Gamma(x,y) |x-y|$.  One can verify that
        $\sup_{(x,y) \in \X \times \X } \Gamma(x,y) = 1$.  Moreover,
        if $d \geq 2 \pi^2$, then $\sup_{(x,y) \in C} \Gamma(x,y) =
        \Gamma(\pi,\pi) = 1$.
	Let $d \in (6,2\pi^2)$, and let $\gamma = \sup_{x^2+y^2 < d}
        \Gamma(x,y)$.  It can be shown that $\gamma < 1$, and $(A2)$
        is satisfied with $K = 1$.  The relation between~$\gamma$ and~$d$ is shown
        in Figure~\ref{fig:gamma}.  Since $K=1$, $(A3)$ holds.  
        
        Remark~\ref{rem:stationary} tells us that~$P$ admits a unique stationary distribution~$\varpi$.
        We can
        now use Corollay~\ref{cor:geometric} to obtain an upper bound on the
        convergence rate of the chain with respect to $W_{\psi} = W_{\varrho}$, namely,
	\[
	\rho_r^A = \left[ \gamma^r (2L+1)^{1-r} \right] \vee \left[
          K^r \left( \frac{\eta d + 2L + 1}{d + 1} \right)^{1-r}
          \right] \,.
	\]
	Note that this bound depends on $r$ and~$d$, both of which can
        be optimized.  The infimum of~$\rho_r^A$ is roughly~$0.976$,
        and this value occurs when $r \approx 0.856$ and $d \approx
        9.2$.  This bound can be improved by letting $V(x) = x^2/c$
        and optimizing $c \in (0,\infty)$, or by finding a sharper
          drift inequality than~\eqref{ine:driftexm}, but we do not
          pursue this any further.

	\begin{figure}
		\centering
		\begin{subfigure}[t]{0.4\textwidth}
			\includegraphics[width=\textwidth]{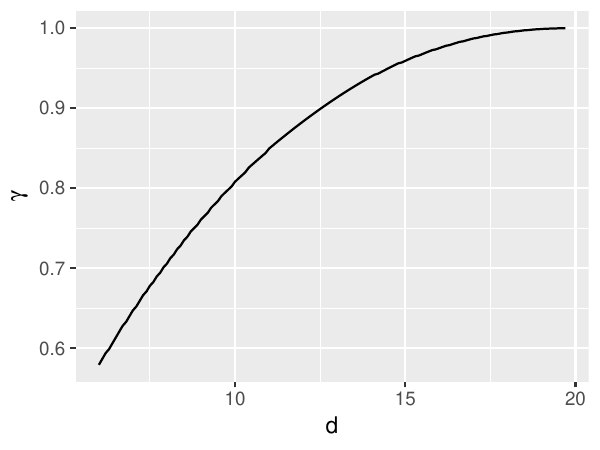}
			\caption{The values of $\gamma =
                          \sup_{x^2+y^2<d} \Gamma(x,y)$ when~$d$
                          ranges over $(6,2\pi^2)$, calculated
                          numerically.} \label{fig:gamma}
		\end{subfigure}
	\hspace{0.5cm}
		\begin{subfigure}[t]{0.4\textwidth}
			\includegraphics[width=\textwidth]{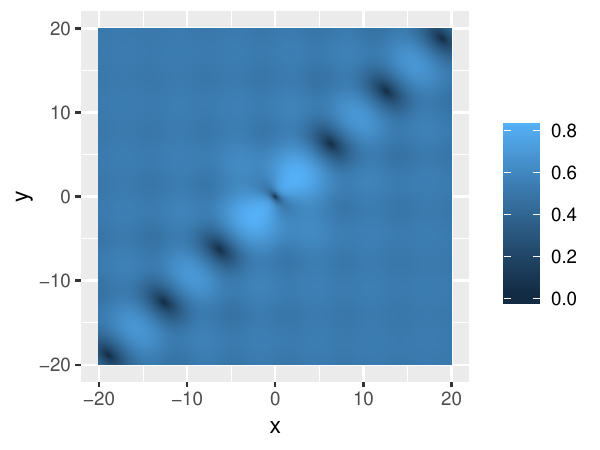}
			\caption{A heat map of $\Gamma(x,y)^r
                          \Lambda(x,y)^{1-r}$ for $r = 0.395$.  The
                          supremum of this function is achieved at
                          around $(-2.3, -2.3)$ and $(2.3, 2.3)$, and
                          is about $0.814$.} \label{fig:heat}
		\end{subfigure}
	\caption{Features of $\Gamma$ and $\Lambda$, defined
          in~\eqref{eq:gammaexm} and~\eqref{eq:lambdaexm}.}
	\end{figure}
	
	We now provide an alternative bound by applying
        Theorem~\ref{thm:main} (or Proposition~\ref{pro:limiting}) directly.  
%        We know from Remark~\ref{rem:piV} that $\varpi V < \infty$, so Theorem~\ref{thm:main} will be able to provide a nontrivial convergence bound if $(B1)$-$(B3)$ hold. 
        By~\eqref{ine:driftexm}, $(B1)$ holds, and for
        every $(x,y) \in \X \times \X$,
	\begin{equation} \label{eq:lambdaexm}
	\frac{PV(x) + PV(y) + 1}{V(x) + V(y) + 1} \leq
        \frac{x^2/2+y^2/2+4}{x^2+y^2+1} =: \Lambda(x,y) \,.
	\end{equation}
	Let $\Gamma: \X \times \X \to [0,\infty)$ be defined as
          in~\eqref{eq:gammaexm}.  
          It's easy to see that $(B2)$ is satisfied.  
          To verify $(B3)$, we will evaluate $\sup_{(x,y) \in \X \times \X} \Gamma(x,y)^r \Lambda(x,y)^{1-r}$ for $r \in (0,1)$.
          Figure~\ref{fig:heat} plots the
          bivariate function $\Gamma(x,y)^r \Lambda(x,y)^{1-r}$ for a
          specific value of $r$.  It is shown in
          Appendix~\ref{app:optimize} that, for each $r \in (0,1)$, one can find 
$
\sup_{(x,y) \in \X \times \X} \Gamma(x,y)^r \Lambda(x,y)^{1-r}
$
         numerically through optimization over a \textit{compact}
         subset of $\X \times \X$.  When~$r$ takes the value $0.395$
         (which is optimal), an upper bound on the chain's convergence rate is
	\[
	\rho_r^B := \sup_{(x,y) \in \X \times \X} \Gamma(x,y)^r
        \Lambda(x,y)^{1-r} \approx 0.814 \,.
	\]
	This is a significant improvement on~$\rho_r^A$, which is
        constructed using standard drift and contraction
        conditions.  The bound can be further improved to $\rho_r^B =
        0.577$ (with $r = 0.382$) if we let
	\begin{equation} \label{eq:lambdaexm2}
	\Lambda(x,y) = \frac{PV(x) + PV(y) + 1}{V(x) + V(y) + 1} =
        \frac{[x/2-\sin(x)/2]^2 + [y/2-\sin(y)/2]^2 + 3}{x^2+y^2+1}
        \,.
	\end{equation}
	We note that it's not really fair to compare the second $\rho_r^B$ with
        $\rho_r^A$, since the latter is based on the loosened drift
        inequality~\eqref{ine:driftexm}.
	
	Of course, in more complicated problems, $\sup_{(x,y) \in \X
          \times \X} \Gamma(x,y)^r \Lambda(x,y)^{1-r}$ would likely be
        much more difficult to evaluate.  Nevertheless, this example
        shows that generalized drift and contraction may contain
        useful information that is not available in their standard
        counterparts.

	\subsection{Gibbs chain for a random effects model}
    
    Let~$q$ and~$m$ be positive integers.
    Consider the random effects model
    \[
    y_{ij} = \mu + \theta_i + e_{ij}, \quad j=1,2,\dots,m, \text{ and }i = 1,2,\dots,q,
    \]
    where, independently, $\theta_i \sim \mbox{N}(0,\lambda_{\theta}^{-1})$, $e_{ij} \sim \mbox{N}(0,\lambda_e^{-1})$.
    Assume that~$\mu$ has a flat prior, and that independently, $\lambda_{\theta}$ and $\lambda_e$ are {\it a priori} $\mbox{Gamma}(a_1,b_1)$ and $\mbox{Gamma}(a_2,b_2)$ distributed, respectively.
    Here, $a_1, a_2, b_1, b_2$ are positive constants, and
    $b_1$ and $b_2$ are rate, rather than scale parameters.
    Denote the observed data $(y_{ij})_{ij}$ by~$y$, and $(\theta_1,\dots,\theta_q)$ by~$\theta$.
    The posterior density function of $(\mu,\theta,\lambda_{\theta},\lambda_e)$ given~$y$ is
    \[
    \begin{aligned}
    	\pi(\mu,\theta,\lambda_{\theta},\lambda_e|y) \propto \, & \lambda_e^{mq/2} \exp \left[ -\frac{\lambda_e}{2} \sum_{i=1}^q \sum_{j=1}^m (y_{ij} - \mu - \theta_i)^2 \right] \\
    	& \times \lambda_{\theta}^{q/2} \exp \left( -\frac{\lambda_{\theta}}{2} \sum_{i=1}^q \theta_i^2 \right) \lambda_{\theta}^{a_1-1} e^{-b_1 \lambda_{\theta}} \lambda_e^{a_2-1} e^{-b_2\lambda_e},
    \end{aligned}
    \]
    where $\lambda_{\theta}, \lambda_e > 0$.
    It can be checked that this posterior is proper.
    
   	Fix $y \in \mathbb{R}^{q  m}$.
    A Gibbs algorithm for sampling from $\pi(\cdot|y)$ can be constructed using the full conditionals $\pi_1(\mu,\theta|\lambda_{\theta},\lambda_e,y)$ and $\pi_2(\lambda_{\theta},\lambda_e|\mu,\theta,y)$, whose exact form can be gleaned from~$\pi(\cdot|y)$ \citep{roman2012convergence}.
    Let $(\mu^{(n)},\theta^{(n)},\lambda_{\theta}^{(n)},\lambda_e^{(n)}), \, n \in \mathbb{Z}_+,$ be the corresponding Markov chain.
    Given $(\mu^{(n)},\theta^{(n)},\lambda_{\theta}^{(n)},\lambda_e^{(n)}) = (\mu,\theta,\lambda_{\theta},\lambda_e)$, the next state, $(\mu^{(n+1)},\theta^{(n+1)},\lambda_{\theta}^{(n+1)},\lambda_e^{(n+1)})$, depends only on $(\mu,\theta)$, and is drawn according to the following steps.
    
    \baro \vspace*{1mm}
    \begin{enumerate}
    	\item Draw $\lambda_{\theta}^{(n+1)}$ from $\mbox{Gamma} \left( q/2+a_1, b_1 + \sum_{i=1}^{p} \theta_i^2/2 \right)$.
    	\item Draw $\lambda_e^{(n+1)}$ from 
    	\[
    	\mbox{Gamma} \bigg( qm/2 + a_2, b_2 + \sum_{i=1}^q \sum_{j=1}^m (y_{ij}-\mu-\theta_i)^2/2 \bigg) .
    	\]
    	\item Draw $\mu^{(n+1)}$ from $\mbox{N} \left( \bar{y}, (\lambda_{\theta}^{(n+1)}+ m\lambda_e^{(n+1)})/(qm\lambda_e^{(n+1)} \lambda_{\theta}^{(n+1)} )  \right)$, where $\bar{y} = (qm)^{-1} \sum_{i,j} y_{ij}$.
    	\item Independently, draw $\theta_i^{(n+1)}$, $i=1,2,\dots,q$, from
    	\[
    	\mbox{N}\left( \frac{m \lambda_e^{(n+1)} (\bar{y}_i - \mu^{(n+1)}) }{\lambda_{\theta}^{(n+1)} + m\lambda_e^{(n+1)}}, \frac{1}{\lambda_{\theta}^{(n+1)} + m\lambda_e^{(n+1)}} \right),
    	\]
    	where $\bar{y}_i = m^{-1} \sum_{j=1}^m y_{ij}$.
    \end{enumerate}
    \vspace*{1mm}
    \barba
    \bigskip
    
	As far as convergence properties are concerned, it suffices to consider the marginal chain $(\mu^{(n)},\theta^{(n)}), \, n \in \mathbb{Z}_+$.
	The state space of this chain is the $(q+1)$-dimensional Euclidean space, which we denote by~$\X$.
	The transition kernel is 
	\[
	P\left((\mu,\theta), \df(\mu',\theta') \right) = \left[ \int_0^{\infty} \int_0^{\infty} \pi_1(\mu',\theta'|\lambda_{\theta},\lambda_e,y) \, \pi_2(\lambda_{\theta},\lambda_e|\mu,\theta,y) \, \df \lambda_{\theta} \, \df \lambda_e \right]  \df \mu' \df \theta' \,,
	\]
	and the stationary distribution~$\varpi$ has density function
	\[
	\frac{\df}{\df \mu \df \theta} \varpi(\mu,\theta) = \int_0^{\infty} \int_0^{\infty}  \pi(\mu,\theta,\lambda_{\theta},\lambda_e|y) \, \df \lambda_{\theta} \, \df \lambda_e \,.
	\]
	Let $\psi = \varrho$ be the metric given by
	\[
	\varrho \left((\mu,\theta), (\mu',\theta') \right) = \sqrt{ q (\mu-\mu')^2 + \sum_{i=1}^q (\theta_i - \theta_i')^2 } \,.
	\]
	\cite{qin2020wasserstein} applied Corollary~\ref{cor:geometric} to study the convergence properties of the chain with respect to $W_{\psi} = W_{\varrho}$.
	Note that \cite{qin2020wasserstein} parametrized $(\mu,\theta)$ differently, so their formula for~$\varrho$ slightly differs from what is given here.
	In Appendix~D of \cite{qin2020wasserstein}, the authors established (up to some minor differences) the following realizations of $(A1)$ and $(B2)$.
	
	\begin{lemma} \label{lem:qinwasser} \citep{qin2020wasserstein}
		For $(\mu,\theta) \in \X$, let
		\[
		V(\mu,\theta) = \frac{1}{q} \sum_{i=1}^q (\theta_i + \bar{y} - \bar{y}_i)^2 + (\mu - \bar{y})^2 \,.
		\]
		Then
		\[
		\varrho\left((\mu,\theta),(\mu',\theta')\right) \leq a \left[ V(\mu,\theta) + V(\mu',\theta') + 1 \right],
		\]
		\[
		PV(\mu,\theta) \leq \eta V(\mu,\theta) + L \,,
		\]
		where $a = 2q$,
		\[
		\eta = \frac{2q + 4}{qm + 2a_2 - 2} + \frac{4}{q + 2a_1 - 2} \,,
		\]
		\[
		L = \frac{(q+2a_1+2) \sum_{i=1}^q (\bar{y}_i - \bar{y})^2 + 4b_1 }{q(q+2a_1-2)} + \frac{q+2}{qm} \frac{\sum_{i=1}^q\sum_{j=1}^m (y_{ij} - \bar{y}_i)^2 + 2b_2 }{qm + 2a_2 - 2} \,.
		\]
		Moreover, for $((\mu,\theta), (\mu',\theta')) \in \X \times \X$,
		\[
		W_{\varrho}(\delta_{(\mu,\theta)} P, \delta_{(\mu',\theta')} P) \leq \Gamma((\mu,\theta),(\mu',\theta')) \, \varrho\left((\mu,\theta), (\mu',\theta') \right) \,,
		\]
		where
		\[
		\begin{aligned}
			\Gamma((\mu,\theta),(\mu',\theta')) =& \left\{ \frac{4}{b_1} \left[ \frac{b_3^2 (q/2+a_1) (q/2+a_1+1) }{ b_1 m^2 (qm/2+a_2-1)(qm/2+a_2-2)  } \wedge 1 \right] \right. \\
			&+ \left. \frac{8(q+2a_1)}{b_1(qm + 2a_2 - 2)} \right\} \sum_{i=1}^q (\bar{y}_i - \bar{y})^2 \\
			&+ \frac{5}{q/2 + a_1 - 1} + \frac{4}{qm/2 + a_2 - 1} \\
			&+ \frac{2b_3 q}{b_1 m (qm + 2a_2- 2)} \wedge \frac{2q}{q+2a_1-2} + \frac{4q}{qm+2a_2-2} \,,
		\end{aligned}
		\]
		\[
			b_3 := b_3((\mu,\theta),(\mu',\theta')) = b_2 + \frac{1}{2} \sum_{i=1}^q \sum_{j=1}^m (y_{ij} - \bar{y}_i)^2 + qm V(\mu,\theta) + qm V(\mu',\theta') \,.
		\]
	\end{lemma}
	
	For the remainder of this subsection, let~$\eta$,~$L$,~$V$, and~$\Gamma$ be defined as in Lemma~\ref{lem:qinwasser}.
	If $\eta < 1$, then $(A1)$ holds.
	To make use of Corollary~\ref{cor:geometric}, one needs to define a coupling set, and find~$\gamma$ and~$K$ in $(A2)$.
	Let the coupling set be of the form
	\[
	C = \left\{ \left((\mu,\theta), (\mu',\theta') \right)  \in \X \times \X: \, V(\mu,\theta) + V(\mu',\theta') < d \right\} ,
	\]
	where $d > 2L/(1-\eta)$.
	Let
	\[
	\gamma = \sup_{((\mu,\theta), (\mu',\theta')) \in C} \Gamma\left((\mu,\theta), (\mu',\theta')\right) , \quad K = \sup_{((\mu,\theta), (\mu',\theta')) \not\in C} \Gamma\left((\mu,\theta), (\mu',\theta')\right).
	\]
	Then $(A2)$ holds with these parameters if $\gamma < 1$ and $K < \infty$.
	Since $\Gamma((\mu,\theta), (\mu',\theta'))$ depends on its argument only through the value of $V(\mu,\theta)+V(\mu',\theta')$, we can let 
	\[
	\Gamma_*(v) = \Gamma\left((\mu,\theta),(\mu',\theta') \right) \text{ if } V(\mu,\theta)+V(\mu',\theta')=v \in [0,\infty) \,.
	\]
	It is easy to obtain the formula of~$\Gamma_*$ from that of~$\Gamma$.
	From there, it is straightforward to establish that $\Gamma_*$ is a non-decreasing function, and that $\Gamma_*(v)$ remains constant when $v \geq v_*$ for some $v_* > 0$ that is determined by $q,m,a_1,a_2,b_1,b_2,$ and~$y$.
	Thus, $\gamma = \Gamma_*(d)$, and $K = \Gamma_*(v_*)$.
	
	In \cite{qin2020wasserstein}, it is shown that, if $m^2$ is a lot larger than $q^{3+\delta}$ for some $\delta > 0$, then, under regularity conditions, one can choose an appropriate~$d$ so that $(A1)$-$(A3)$ are all satisfied with $\eta,L,\gamma,K$ given above.
	The resultant bound via Corollary~\ref{cor:geometric},
	\[
	\rho_r^A = \left[ \gamma^r (2L+1)^{1-r} \right] \vee \left[ K^r \left(
	\frac{\eta d + 2L + 1}{d + 1} \right)^{1-r} \right],
	\]
	is shown to be asymptotically vanishing when $m^2/q^{3+\delta} \to \infty$.
	Although this bound is sharp asymptotically, it can be quite loose for fixed data sets.
	In what follows, we give a numerical illustration, and show how a bound based on Theorem~\ref{thm:main} outperforms $\rho_r^A$.
	
%		\cite{qin2020wasserstein} studied the algorithm's convergence rate for a sequence of data sets indexed by~$q$ under the following conditions:
%		\begin{enumerate}
%			\item[$(S1)$] There exists a constant $\delta > 0$ such that $m^2/q^{3+\delta} \to \infty$ as $q \to \infty$.
%			\item[$(S2)$] There exist positive constants $\ell_1$ and $\ell_2$ such that, for large enough~$q$,
%			\[
%			\frac{1}{q} \sum_{i=1}^q (\bar{y}_i-\bar{y})^2 < \ell_1 \,, \quad \frac{1}{qm} \sum_{i=1}^q \sum_{j=1}^m (y_{ij} - \bar{y}_i)^2 < \ell_2 \,.
%			\]
%		\end{enumerate}
%	One can select appropriate values of $(d,r)$, one for each data set, and use Lemma~\ref{lem:qinwasser} and Corollary~\ref{cor:geometric} to construct a sequence of convergence rate bounds, 
%	\[
%	\rho_r^A = \left[ \gamma^r (2L+1)^{1-r} \right] \vee \left[ K^r \left(
%	\frac{\eta d + 2L + 1}{d + 1} \right)^{1-r} \right].
%	\]
%	It can be shown that, under $(S1)$ and $(S2)$, $\rho_r^A \to 0$ as $q \to \infty$ \citep[][Proposition~25]{qin2020wasserstein}.
%	Although this bound is extremely well-behaved asymptotically, it can be ill-behaved non-asymptotically.
%	In what follows, we give a numerical illustration, and show how a bound based on Theorem~\ref{thm:main} outperforms $\rho_r^A$.

	Let $q=50$ and $m=1000$.
	Set the true value of $(\mu,\lambda_{\theta},\lambda_e)$ to be $(1,10,5)$, and generate~$y$ according to the random effects model.
	In our simulated data set,
	\[
	\frac{1}{q} \sum_{i=1}^q (\bar{y}_i-\bar{y})^2 = 0.0712 \,, \quad \frac{1}{qm} \sum_{i=1}^q \sum_{j=1}^m (y_{ij} - \bar{y}_i)^2 = 0.199 \,.
	\]
	Set $a_1=a_2=b_1=b_2=1$.
	Using formulas in Lemma~\ref{lem:qinwasser}, one can find that $(A1)$ holds with $\eta = 0.0821$ and $L = 0.0787$.
	It can be shown that $\Gamma_*(v)$ is a non-decreasing function that stays constant when $v \geq v_*$, where $v_*$ is a number slightly smaller than~20.
	(See Figure~\ref{fig:Gibbs}.)
	This allows us to evaluate $(\gamma,K)$ for different choices of~$d$ easily.
	Letting $d = 1.35$ and $r=0.176$ yields the sharpest (smallest) bound $\rho_r^A = 0.985$.

	\begin{figure}
		\centering
		\begin{subfigure}[t]{0.4\textwidth}
			\includegraphics[width=\textwidth]{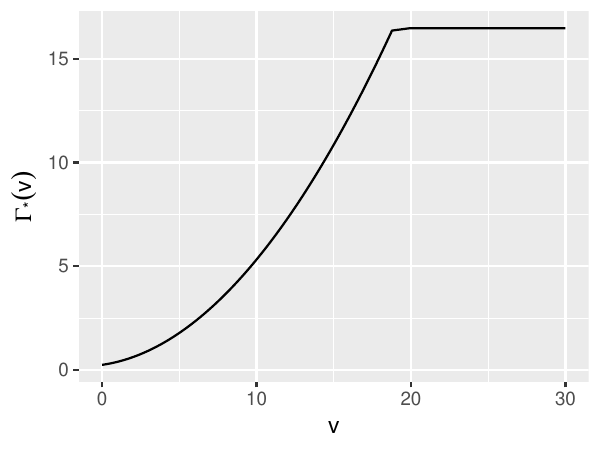}
			%{} \label{fig:Gamma-Gibbs}
		\end{subfigure}
		\hspace{0.5cm}
		\begin{subfigure}[t]{0.4\textwidth}
			\includegraphics[width=\textwidth]{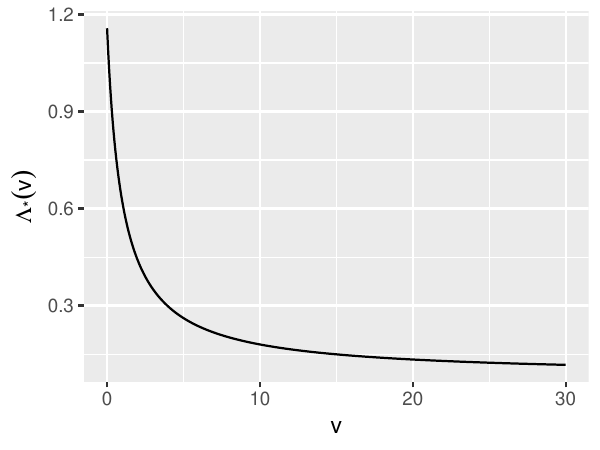}
			%\caption{} \label{fig:Lambda-Gibbs}
		\end{subfigure}
		\caption{Graphs of $\Gamma_*$ and $\Lambda_*$.} \label{fig:Gibbs}
	\end{figure}
	
	We now couple Lemma~\ref{lem:qinwasser} with Theorem~\ref{thm:main} (or Proposition~\ref{pro:limiting}) instead to form a sharper convergence rate bound.
	We see from Lemma~\ref{lem:qinwasser} that $(B1)$ and $(B2)$ hold with the functions~$V$ and~$\Gamma$.
	Let
	\[
	\Lambda((\mu,\theta),(\mu',\theta')) = \frac{\eta [V(\mu,\theta) + V(\mu,\theta)] + 2L + 1 }{V(\mu,\theta) + V(\mu,\theta) + 1}.
	\]
	As with $\Gamma((\mu,\theta),(\mu',\theta'))$, $\Lambda((\mu,\theta),(\mu',\theta'))$ depends on its argument only through the value of $V(\mu,\theta) + V(\mu',\theta')$.
	We can therefore let 
	\[
	\Lambda_*(v) = \Lambda\left( (\mu,\theta),(\mu',\theta') \right) \text{ if } V(\mu,\theta) + V(\mu',\theta') = v \in [0,\infty) \,.
	\]
	$\Lambda_*(v) = (\eta v + 2L +1)/(v+1)$ is decreasing in~$v$.
	(See Figure~\ref{fig:Gibbs}.)
	Moreover, recall that $\Gamma_*(v)$ is non-decreasing and constant when $v \geq 20$.
	For $r \in (0,1)$, let $\rho_r^B$ be the convergence rate bound constructed via Theorem~\ref{thm:main}, so that
	\[
	\begin{aligned}
		\rho_r^B &= \sup_{((\mu,\theta),(\mu',\theta'))} \Gamma\left( (\mu,\theta),(\mu',\theta') \right)^r \Lambda\left( (\mu,\theta),(\mu',\theta') \right)^{1-r} \\
		&= \sup_{v \in [0,20]} \Gamma_*(v)^r \Lambda_*(v)^{1-r} \,.
	\end{aligned}
	\]
	Let $r=0.335$.
	Then $\Gamma_*(v)^r \Lambda_*(v)^{1-r}$ is maximized when $v=18.78$, and this yields $\rho_r^B = 0.687$, which is significantly smaller than $\rho_r^A = 0.985$.

	\vspace{0.5cm}
	\noindent {\bf \large Acknowledgment}.  We thank the Editor, the Associate Editor, and two anonymous
        reviewers for helpful comments and suggestions.  The second
        author was supported by NSF Grant DMS-15-11945.
	
	\vspace{1cm}
	
	\noindent{\bf \Large Appendix}
	\appendix
	
        \section{Comparison of $\rho_r$ and Durmus and Moulines's Geometric Rate Bound} 
        \label{app:compare}

We begin with a formal statement of Durmus and Moulines's result.

	\begin{proposition} 
          \label{pro:durmus} \citep[][]{durmus2015quantitative}
		Suppose that the metric~$\varrho$ is bounded above
                by~$1$, and that the following two conditions hold:
		\begin{enumerate}
			\item [$(C1)$] There exists a measurable
                          function $\bar{V}: \X \to [1,\infty)$, $\eta'
                            \in [0,1)$, and $L' \in \mathbb{R}$ such
                                that
			\[
			P\bar{V}(x) \leq \eta' \bar{V}(x) + L' \,, \quad x \in \X \,.
			\]
			\item [$(C2)$] There exists $\gamma' \in [0,1)$ such
                          that for each $(x,y) \in \X \times \X$,
			\[
			W_{\varrho}(\delta_x P, \delta_y P) \leq \begin{cases}
			\gamma' \varrho(x,y) \,, & (x,y) \in C' \,,\\
		        \varrho(x,y) \,, & (x,y) \not\in C' \,,
			\end{cases}
			\]
			where the coupling set~$C'$ is defined to be
                        $\big \{ (x,y) \in \X \times \X :\, \bar{V}(x)
                        + \bar{V}(y) \leq (2L'+\delta')/(1-\eta') \big
                        \}$, for some $\delta' > 0$ \,.
		\end{enumerate}
	Then~$P$ admits a unique stationary distribution~$\varpi$.
        Moreover, for each $x \in \X$ and every $n \in \mathbb{Z}_+$,
	\[
	W_{\varrho}(\delta_x P^n, \varpi) \leq \left( \frac{1}{2} +
        \frac{1}{(\gamma')^2} \right) \left( 1 + \frac{L'}{1-\eta'} \right)
        V(x) \left(\rho^{\mbox{\scriptsize{DM}}} \right)^n \,,
	\]
	where
	\[
	\rho^{\mbox{\scriptsize{DM}}} = \exp \left( -\frac{\log
          (1/\lambda) \log (1/\gamma')}{\log J + \log (1/\gamma')} \right) \,,
	\]
	$\lambda = 2L'(1-\eta')/(2L'+\delta') + \eta'$, and $J =
        (2L'+\delta')/(1-\eta') + 2L'/\lambda \,$.
	\end{proposition}

The following result shows that
Corollary~\ref{cor:geometric} can always be used to improve
upon $\rho^{\mbox{\scriptsize{DM}}}$.

\begin{proposition}
  \label{prop:better}
    Assume that~$\varrho$ is bounded, and $(C1)$ and $(C2)$ are satisfied.  Then
    Corollary~\ref{cor:geometric} can be used to construct an
    alternative convergence rate bound, $\rho_r$, such that $\rho_r <
    \rho^{\mbox{\scriptsize{DM}}}$.
\end{proposition}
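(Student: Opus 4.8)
The plan is to derive $(A1)$, $(A2)$, and $(A3)$ from $(C1)$ and $(C2)$ with a \emph{particular} choice of drift function and coupling set, apply Corollary~\ref{cor:geometric} to obtain an explicit $\rho_r$, and then compare the two closed-form rate expressions directly. The key decision — which is where the real content lies — is to translate $(C1)$ using $V = \bar{V} - 1$ rather than the naive $V = \bar{V}$.

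First I would record the elementary fact that $(C1)$ forces $\eta' + L' \geq 1$: since $\bar{V} \geq 1$ we have $P\bar{V} \geq 1$ everywhere, so evaluating $(C1)$ at points where $\bar{V}$ approaches its infimum $m_* \geq 1$ gives $m_* \leq \eta' m_* + L'$, whence $L' \geq m_*(1-\eta') \geq 1-\eta'$. Consequently $V := \bar{V} - 1 \geq 0$, and since $\psi \leq 1 \leq V(x)+V(y)+1$, inequality~\eqref{ine:Vpsi} holds with $a = 1$; moreover $PV(x) = P\bar{V}(x) - 1 \leq \eta' V(x) + (\eta'+L'-1)$, so $(A1)$ holds with $\eta = \eta'$ and $L = \eta'+L'-1 \geq 0$. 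For the contraction I would take $\gamma = \gamma'$, $K = 1$, and coupling set $C = \{V(x)+V(y) < d\}$ with $d = (2L'+\delta')/(1-\eta') - 2$; a short computation gives $d - 2L/(1-\eta) = \delta'/(1-\eta') > 0$, and rewriting the threshold shows $C = \{(x,y): \bar{V}(x)+\bar{V}(y) < (2L'+\delta')/(1-\eta')\} \subseteq C'$, so $(C2)$ yields $(A2)$ with ratio $\gamma'$ on $C$ and ratio $1$ on its complement. Since $K = 1$, $(A3)$ is automatic, and Corollary~\ref{cor:geometric} gives, for admissible $r$, the rate $\rho_r = [\gamma'^r(2L+1)^{1-r}] \vee [\lambda_d^{1-r}]$ with $\lambda_d := (\eta' d + 2L+1)/(d+1)$.

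I would then choose $r = r_*$, the unique balance point at which the two bracketed terms coincide; one checks that $r_* \in (0,1)$ and that it lies in the interval~\eqref{ine:r} (its lower endpoint is the $r$ at which $\gamma'^r(2L+1)^{1-r} = 1$, while at $r_*$ that expression equals $\lambda_d^{1-r_*} < 1$). At $r_*$ the rate is $\log\rho_{r_*} = \frac{\log\gamma'\,\log\lambda_d}{\log\gamma' + \log\lambda_d - \log(2L+1)}$. Writing $g = -\log\gamma'$, $\ell = -\log\lambda$, $\ell_d = -\log\lambda_d$, $m = \log(2L+1) \geq 0$, $j = \log J$, the claim $\rho_{r_*} < \rho^{\mathrm{DM}}$ is equivalent to $g(\ell_d-\ell) + \ell_d(j-\ell) - \ell m > 0$, which follows from two purely algebraic facts: $(\mathrm{i})$ $\lambda_d < \lambda$, i.e.\ $\ell_d > \ell$; and $(\mathrm{ii})$ $J\lambda > 2L+1$, i.e.\ $j - \ell > m$. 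For $(\mathrm{i})$, substituting $d$ gives $\lambda_d = [(2L'+\eta'\delta') - (1-\eta')]/[(2L'+\delta') - (1-\eta')]$ — that is, $\lambda = (2L'+\eta'\delta')/(2L'+\delta') < 1$ with the same positive quantity $1-\eta'$ subtracted from numerator and denominator (legitimate since $2L' \geq 2(1-\eta') > 1-\eta'$), which strictly decreases a ratio below $1$. For $(\mathrm{ii})$, using $\lambda(2L'+\delta') = 2L'+\eta'\delta'$ one gets $J\lambda = (2L'+\eta'\delta')/(1-\eta') + 2L'$, so $J\lambda - (2L+1) = (2L'+\eta'\delta')/(1-\eta') - (2\eta'-1)$, which is positive (trivially if $\eta' \leq 1/2$, and otherwise since $(2L'+\eta'\delta')/(1-\eta') \geq 2L'/(1-\eta') \geq 2 > 2\eta'-1$). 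Given $(\mathrm{i})$ and $(\mathrm{ii})$ we have $\ell_d(j-\ell) > \ell_d m \geq \ell m$ and $g(\ell_d-\ell) > 0$, so the target inequality holds.

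The main obstacle is not any single computation but the choice of drift function. With the obvious translation $V = \bar{V}$ one obtains $2L+1 = 2L'+1$ and, after the analogous computation, $\lambda_d > \lambda$; the resulting $\rho_r$ can then actually exceed $\rho^{\mathrm{DM}}$, so the proposition would be false with that choice. Passing to $V = \bar{V}-1$ simultaneously shrinks $2L+1$ to $2\eta'+2L'-1$ and pushes $\lambda_d$ strictly below $\lambda$, which is precisely what makes inequalities $(\mathrm{i})$ and $(\mathrm{ii})$ work. Recognizing this shift, and then verifying that the balanced exponent $r_*$ stays inside the admissible range~\eqref{ine:r}, are the two points that require care; everything else is bookkeeping.
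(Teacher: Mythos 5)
Your proof is correct and follows essentially the same strategy as the paper: shift the drift function, choose $d$ so that the translated coupling set matches $C'$, take the balance exponent $r_*$ where the two terms in $\rho_r$ coincide, and reduce the comparison with $\rho^{\mathrm{DM}}$ to an algebraic inequality. The only substantive difference is the translation constant: the paper uses $V = \bar{V} - 1/2$ (so $L = L' + \eta'/2 - 1/2$, $2L+1 = 2L' + \eta'$), which is chosen precisely so that with $d = (2L+\delta')/(1-\eta)$ one gets $(\eta d + 2L+1)/(d+1) = \lambda$ \emph{exactly}; then $\rho_{r_*} = \lambda^{1-r_*}$ and the comparison collapses to the single inequality $\log(2L+1)-\log\lambda < \log J$, i.e.\ $J\lambda > 2L+1$. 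With your shift by $1$ you instead get $\lambda_d < \lambda$ strictly, and so you need the extra combination step $g(\ell_d - \ell) + \ell_d(j-\ell) - \ell m > 0$; this is correct but a bit longer. Your closing remark that a shift by $1$ (rather than $0$) is ``the'' key decision is therefore slightly off --- the half-unit shift also works and gives a cleaner proof --- but every step in your derivation checks out: $L = \eta'+L'-1 \geq 0$ by the observation $L' \geq 1-\eta'$, $d - 2L/(1-\eta) = \delta'/(1-\eta') > 0$, the formula for $\lambda_d$, the monotonicity argument for $(\mathrm{i})$, the algebra in $(\mathrm{ii})$, and the verification that $r_*$ lies in~\eqref{ine:r}.
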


\begin{proof}
Since $\bar{V} \geq 1$, $(C1)$ implies that $L' \geq (1-\eta') \varpi
\bar{V} \geq 1 - \eta' > 0$ \citep[see][Proposition
  4.24]{hairer2018ergodic}.  Thus, $\lambda \in (\eta', 1)$ and $J >
1$.

We now translate $(C1)$ and $(C2)$ into $(A1)$-$(A3)$ and apply
Corollary~\ref{cor:geometric}.  Take $\psi = \varrho$ and $V =
\bar{V} - 1/2$.  Then by $(C1)$ and the boundedness of~$\psi$, $(A1)$ holds with
$a = 1$, $\eta = \eta'$, and $L = L' + \eta'/2 - 1/2$.  (Note that $L
\geq 0$.)  Take $d = (2L + \delta')/(1-\eta)$, and set
\[
C = \big\{ (x,y) \in \X \times \X : \, V(x) + V(y) < d \big\} =
\bigg\{ (x,y) \in \X \times \X : \, \bar{V}(x) + \bar{V}(y) <
\frac{2L'+\delta'}{1-\eta'} \bigg\} \subset C' \,.
\]
By $(C2)$, condition $(A2)$ holds with $\gamma = \gamma'$, $K = 1$,
and~$C$ given above.  
Since $K=1$, $(A3)$ holds.
Corollary~\ref{cor:geometric}, along with Remark~\ref{rem:stationary}, states that the chain's $\varrho$-induced
Wasserstein distance to its unique stationary distribution decreases
at a geometric rate of (at most)
\[
\rho_r = \big[ \gamma^r(2L+1)^{1-r} \big] \vee \left( \frac{\eta d +
  2L + 1}{d + 1} \right)^{1-r} \,,
\]
where
\[
r \in \bigg( \frac{\log(2L+1)}{\log(2L+1)+\log (1/\gamma)}, 1 \bigg) \;.
\]
Note that
\[
\frac{\eta d + 2L + 1}{d + 1} = \eta' + \frac{2L'}{d + 1} = \lambda
\,.
\]
Now set
\[
r = \frac{\log (2L+1) + \log (1/\lambda)}{\log (2L+1) + \log (1/\lambda) +
  \log (1/\gamma)} \;.
\]
A direct calculation shows that, with this value of $r$, we have
\[
\gamma^r(2L+1)^{1-r} = \lambda^{1-r} = \exp \left( -\frac{\log (1/\lambda) \log (1/\gamma')}
{\log (2L+1) + \log (1/\lambda) + \log (1/\gamma')} \right) .
\]
It follows that
\[
\rho_r = \exp \left( -\frac{\log (1/\lambda) \log (1/\gamma')}
{\log (2L+1) + \log (1/\lambda) + \log (1/\gamma')} \right) .
\]
Since $L' \geq 1 - \eta'$, one has $J\lambda > 2\lambda + 2L' > \eta' + 2L' =
2L+1$.  As a result, $0 < \log (2L+1) + \log (1/\lambda) < \log J$, which
implies that $\rho_r < \rho^{\mbox{\scriptsize{DM}}}$.
\end{proof}

Finally, as argued at the end of Section~\ref{sec:general}, if
$\rho_r$ is calculated based on a set of generalized drift and
contraction conditions, then it may be further improved.  As
demonstrated in Section~\ref{sec:example}, the convergence rate bound
in Theorem~\ref{thm:main} can be considerably shaper than that in
Corollary~\ref{cor:geometric}, and thus, substantially sharper than
$\rho^{\mbox{\scriptsize{DM}}}$ as well.

	\section{Finding~$\rho_r^B$ for the Perturbed Linear Autoregressive Chain}
        \label{app:optimize}

	Let $\X = \mathbb{R}$, and consider the linear autoregressive
        chain perturbed by a trigonometric term from
        Section~\ref{sec:example}.  We now explain how to find
	\[
	\rho_r^B = \sup_{(x,y) \in \X \times \X} \Gamma(x,y)^r
        \Lambda(x,y)^{1-r} \,, \quad r \in (0,1) \,,
	\]
	where~$\Lambda$ and~$\Gamma$ are, respectively, given by~\eqref{eq:lambdaexm} and~\eqref{eq:gammaexm}.
	The main result is as follows.
	
	\begin{proposition} \label{pro:app1}
		Let~$\Lambda$ and~$\Gamma$ be defined as in~\eqref{eq:lambdaexm} and~\eqref{eq:gammaexm}.
		Then for any $r \in  (0,1)$,
		\[
		\arg\max_{(x,y) \in \X \times \X} \Gamma(x,y)^r
                \Lambda(x,y)^{1-r} \subset C_0 := \{(x,y): \, x^2 +
                y^2 \leq 2\pi^2 \} \,.
		\]
	\end{proposition}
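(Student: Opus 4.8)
The plan is to leverage two elementary structural facts about the two factors. First, $\Lambda$ depends on $(x,y)$ only through $s := x^2+y^2$, and
\[
\Lambda(x,y) = \frac{s/2 + 4}{s+1} = \frac12 + \frac{7}{2(s+1)} \,,
\]
which is strictly decreasing in $s$; in particular $\Lambda(\pi,\pi) = \tfrac12 + \tfrac{7}{2(2\pi^2+1)} = \tfrac{\pi^2+4}{2\pi^2+1}$ is exactly the value of $\Lambda$ on the boundary $s = 2\pi^2$ of $C_0$. Second, from~\eqref{eq:gammaexm}, on the diagonal $\Gamma(x,x) = (1-\cos x)/2 \in [0,1]$, and off the diagonal $\Gamma(x,y) = \tfrac12\bigl(1 - \tfrac{\sin x - \sin y}{x-y}\bigr) \in [0,1]$ because $\tfrac{\sin x - \sin y}{x-y} \in [-1,1]$ (e.g. via $\sin x - \sin y = 2\cos\tfrac{x+y}{2}\sin\tfrac{x-y}{2}$); so $0 \le \Gamma \le 1$ on all of $\X \times \X$, with $\Gamma(\pi,\pi) = \tfrac12 - \tfrac{\cos\pi}{2} = 1$.

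With these in hand, the core of the argument is a one-line comparison. Fix $r \in (0,1)$ and take any $(x,y) \notin C_0$, so that $s = x^2+y^2 > 2\pi^2$. I would then chain
\[
\Gamma(x,y)^r \Lambda(x,y)^{1-r} \;\le\; \Lambda(x,y)^{1-r} \;<\; \Lambda(\pi,\pi)^{1-r} \;=\; \Gamma(\pi,\pi)^r \Lambda(\pi,\pi)^{1-r} \,,
\]
where the first inequality uses $\Gamma \le 1$, the strict middle inequality uses the strict monotonicity of $\Lambda$ in $s$ together with $s > 2\pi^2$, and the last equality uses $\Gamma(\pi,\pi) = 1$. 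Since $\pi^2 + \pi^2 = 2\pi^2$, the point $(\pi,\pi)$ lies in $C_0$, so every point outside $C_0$ is strictly beaten by a point of $C_0$; hence no maximizer of $\Gamma^r\Lambda^{1-r}$ can lie outside $C_0$.

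To make the conclusion non-vacuous I would also record that $\Gamma$ is continuous on $\X \times \X$ — the quotient $\tfrac{\sin x - \sin y}{x-y}$ extends continuously to $\cos x$ on the diagonal — so $\Gamma^r \Lambda^{1-r}$ is continuous, and being continuous on the compact set $C_0$ it attains a maximum there, of value at least $\Lambda(\pi,\pi)^{1-r}$. By the display this value dominates $\Gamma^r\Lambda^{1-r}$ at every point of $\X \times \X \setminus C_0$, so it is the global maximum and is attained only inside $C_0$. This gives $\arg\max_{(x,y) \in \X \times \X} \Gamma(x,y)^r \Lambda(x,y)^{1-r} \subset C_0$.

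There is no serious obstacle here; the only points needing a little care are the uniform bound $\tfrac{\sin x - \sin y}{x-y} \in [-1,1]$ (hence $\Gamma \le 1$) and the continuity of $\Gamma$ across the diagonal, plus the arithmetic observation that the constant $\tfrac{\pi^2+4}{2\pi^2+1}$ that appears as $\Lambda$ on $\partial C_0$ is precisely $\Lambda(\pi,\pi)$ while $\Gamma(\pi,\pi)=1$, so that $(\pi,\pi)$ is the right anchor point. I would also double-check that the radius $2\pi^2$ is the natural choice: it equals $2x^2$ for the smallest $|x|$ (namely $x=\pi$) at which $\Gamma$ attains its global supremum $1$, so a smaller radius would break the comparison and a larger one would only weaken the statement.
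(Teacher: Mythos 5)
Your proof is correct, and it takes a cleaner route than the paper's. The paper also observes that $\Lambda$ depends only on $s = x^2 + y^2$ and is strictly decreasing in $s$, but rather than bounding $\Gamma$ from above by $1$, it uses the mean value theorem to produce, for each $(x,y) \notin C_0$, a point $\xi'$ in $[-\pi,\pi]$ with $\Gamma(\xi',\xi') = \Gamma(x,y)$; it then compares $(x,y)$ to the $(x,y)$-dependent point $(\xi',\xi') \in C_0$, keeping the $\Gamma$ factor exactly fixed while strictly increasing $\Lambda$. You instead compare to the single fixed anchor $(\pi,\pi)$, trading the $\Gamma$-matching step for the global bound $\Gamma \le 1 = \Gamma(\pi,\pi)$, which follows from the $1$-Lipschitz bound $|\sin x - \sin y| \le |x-y|$. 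Both arguments hinge on the same two facts (strict monotonicity of $\Lambda$ in $s$, and that $\Gamma$'s supremum $1$ is realized at $(\pi,\pi) \in \partial C_0$), but yours avoids the explicit MVT construction and the extraction of $\xi'$, and also explicitly records that $\Gamma^r\Lambda^{1-r}$ is continuous so the argmax is nonempty --- a point the paper leaves implicit. The paper's version has the minor advantage of identifying a dominating point with the same $\Gamma$ value, which makes transparent that the restriction to $C_0$ loses nothing in the $\Gamma$ direction; your version makes transparent why $2\pi^2$ is exactly the right radius.
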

	
	\begin{proof}
		Let $r \in (0,1)$ and $(x,y) \in (\X \times \X) \setminus C_0$ be arbitrary.
		It suffices to show that there exists $(x',y') \in C_0$ such that
		\begin{equation} \label{ine:app1}
		\Gamma(x,y)^r \Lambda(x,y)^{1-r} < \Gamma(x',y')^r
                \Lambda(x',y')^{1-r} \,.
		\end{equation}
		By the mean value theorem and periodicity, there exists a point $\xi
                \in [x,y]$ (or $[y,x]$), as well as a point $\xi' \in
                    [-\pi,\pi]$ such that
		\[
		\Gamma(x,y) = \frac{1}{2}- \frac{\cos \xi}{2} =
                \frac{1}{2}- \frac{\cos \xi'}{2} = \Gamma(\xi',\xi')
                \,.
		\]
		Note that $(\xi',\xi') \in C_0$.  Moreover, it's easy
                to verify that $\Lambda(x,y) < \Lambda(\xi',\xi')$.
                As a result,~\eqref{ine:app1} holds with $x'=y'=\xi'$.
	\end{proof}
	
	Proposition~\ref{pro:app1} implies that, to maximize
        $\Gamma(x,y)^r \Lambda(x,y)^{1-r}$ over $\X \times \X$, we
        only need to restrict our attention to the compact set $C_0$.
        Since the objective function is uniformly continuous on $C_0$,
        we can solve the problem by optimizing over a sufficiently
        fine grid.
	
	Finally, assume instead that~$\Lambda$ is given
        by~\eqref{eq:lambdaexm2}.  The analog of
        Proposition~\ref{pro:app1} is as follows.
	\begin{proposition}
		Let~$\Lambda$ and~$\Gamma$ be defined as
                in~\eqref{eq:lambdaexm2} and~\eqref{eq:gammaexm}.
                Then for any $r \in (0,1)$,
		\[
		\arg \max_{(x,y) \in \X \times \X} \Gamma(x,y)^r
                \Lambda(x,y)^{1-r} \subset C'_0 := \{(x,y): \, |x|\leq
                26, \, |y| \leq 26 \} \,.
		\]
	\end{proposition}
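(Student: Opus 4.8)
The plan is to follow the proof of Proposition~\ref{pro:app1} as closely as possible, the one new feature being that the $\Lambda$ of~\eqref{eq:lambdaexm2}, unlike the one in~\eqref{eq:lambdaexm}, is \emph{not} monotone in $x^2+y^2$, so the comparison between $\Lambda(x,y)$ and $\Lambda(\xi',\xi')$ has to be made quantitatively. Since $\Gamma$, $\Lambda$ and $C_0'$ are all symmetric in their two arguments, it is enough to fix $r\in(0,1)$ and an arbitrary $(x,y)$ with $|x|\ge|y|$ and $|x|>26$, and to produce a point of $C_0'$ at which $\Gamma^r\Lambda^{1-r}$ is strictly larger than at $(x,y)$; this shows $(x,y)\notin\arg\max$. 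If $\Gamma(x,y)=0$ the objective vanishes at $(x,y)$ while being positive at, e.g., $(\pi,\pi)$, so $(x,y)$ is not a maximizer; hence we may assume $\Gamma(x,y)>0$, which forces $x\ne y$.

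Exactly as in Proposition~\ref{pro:app1}, the mean value theorem applied to $\sin$ gives $\Gamma(x,y)=\tfrac12(1-\cos\xi)$ for some $\xi$ strictly between $x$ and $y$, and setting $\xi'=\arccos(\cos\xi)\in[0,\pi]$ we get $(\xi',\xi')\in C_0'$ (because $\pi<26$) and $\Gamma(\xi',\xi')=\tfrac12(1-\cos\xi')=\Gamma(x,y)>0$. Since the $\Gamma$-factors now agree, it remains to prove $\Lambda(x,y)<\Lambda(\xi',\xi')$, for then $\Gamma(x,y)^r\Lambda(x,y)^{1-r}=\Gamma(\xi',\xi')^r\Lambda(x,y)^{1-r}<\Gamma(\xi',\xi')^r\Lambda(\xi',\xi')^{1-r}$.

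To compare the $\Lambda$-values I would expand the squares in~\eqref{eq:lambdaexm2} and write $\Lambda(x,y)=\tfrac14+\frac{N(x,y)}{x^2+y^2+1}$ with $N(x,y)=-\tfrac12(x\sin x+y\sin y)+\tfrac14(\sin^2 x+\sin^2 y)+\tfrac{11}{4}$, so that $\Lambda(\xi',\xi')=\tfrac14+\frac{N(\xi',\xi')}{2(\xi')^2+1}$ with $N(\xi',\xi')=\tfrac{11}{4}-\xi'\sin\xi'+\tfrac12\sin^2\xi'$. On the one hand, if $N(x,y)\le 0$ then $\Lambda(x,y)\le\tfrac14$, while if $N(x,y)>0$ then, using $|t\sin t|\le|t|$, $\sin^2\le 1$ and $|y|\le|x|$, one gets $N(x,y)\le|x|+\tfrac{13}{4}$ and hence $\Lambda(x,y)\le\tfrac14+\frac{|x|+13/4}{x^2+1}$, a quantity strictly decreasing in $|x|$; in either case $\Lambda(x,y)<\tfrac14+\tfrac{29.25}{677}$ whenever $|x|>26$. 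On the other hand, since $0\le\xi'\le\pi$ we have $2(\xi')^2+1\le 2\pi^2+1$ and $N(\xi',\xi')\ge\tfrac{11}{4}-\sup_{t\in[0,\pi]}t\sin t$, whence $\Lambda(\xi',\xi')\ge\tfrac14+\frac{\,(11/4)-\sup_{t\in[0,\pi]}t\sin t\,}{2\pi^2+1}$. The proof is completed by checking that the right-hand side of this last inequality exceeds $\tfrac14+\tfrac{29.25}{677}$, i.e.\ the numerical bound $\sup_{t\in[0,\pi]}t\sin t<\tfrac{11}{4}-(2\pi^2+1)\cdot\tfrac{29.25}{677}\approx 1.85$. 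The main obstacle is precisely this last numerical step: the two estimates for $\Lambda$ separate only narrowly (this is essentially why the box half-width is $26$ rather than something smaller), so $\sup_{t\in[0,\pi]}t\sin t$ — which equals $(t^*)^2/\sqrt{1+(t^*)^2}$, where $t^*\in(2,2.04)$ is the solution of $\tan t=-t$ in $(\tfrac{\pi}{2},\pi)$, and is $\approx 1.82$ — must be bounded with some care (an elementary but not entirely automatic estimate). Once $\arg\max\subset C_0'$ has been established, the supremum defining $\rho_r^B$ under~\eqref{eq:lambdaexm2} can, exactly as in the discussion after Proposition~\ref{pro:app1}, be evaluated by optimizing the uniformly continuous objective over a sufficiently fine grid on the compact set $C_0'$.
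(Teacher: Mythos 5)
Your outline is the same as the paper's: use the mean value theorem to find a diagonal point $(\xi',\xi')$ at which $\Gamma$ matches $\Gamma(x,y)$, and then show that $\Lambda(x,y) < \Lambda(\xi',\xi')$. The one place where you and the paper part ways is the choice of representative $\xi'$, and it turns out to be exactly the step that determines how much numerical work is needed. You take $\xi' = \arccos(\cos\xi) \in [0,\pi]$, on which $\xi'$ and $\sin\xi'$ have the \emph{same} sign, so the cross term $-\xi'\sin\xi'$ in the numerator of $\Lambda(\xi',\xi')$ is negative and must be controlled by a bound on $\sup_{t\in[0,\pi]} t\sin t$. The paper instead places the representative in $[\pi,2\pi]$ (or its mirror $[-2\pi,-\pi]$), where cosine still takes every value in $[-1,1]$ exactly once but where $\xi$ and $\sin\xi$ have \emph{opposite} signs, so $-\xi\sin\xi \ge 0$ and one gets the clean, monotone lower bound $\Lambda(\xi,\xi)\ge\frac{\xi^2/2+3}{2\xi^2+1}>0.284$ with no trigonometric estimate at all. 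The paper also keeps $|x|$ and $|y|$ separate in the upper bound on $\Lambda(x,y)$ (bounding $\frac{0.5|y|}{x^2+y^2+1}$ via AM-GM) rather than passing through $|y|\le|x|$, which yields the tighter ceiling $\Lambda(x,y)<0.284$ versus your $\approx 0.2932$.

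The gap in your argument, which you flag yourself, is that the final numerical claim $\sup_{t\in[0,\pi]} t\sin t < \frac{11}{4}-(2\pi^2+1)\cdot\frac{29.25}{677}\approx 1.854$ is asserted rather than established. The claim is true (the supremum is $\approx 1.820$, attained near $t^*\approx 2.03$), and the margin is wide enough that it can be closed — e.g.\ by checking $\sin(2.04)+2.04\cos(2.04)<0$ to conclude $t^*<2.04$ and then using the monotone expression $(t^*)^2/\sqrt{1+(t^*)^2}<(2.04)^2/\sqrt{1+(2.04)^2}\approx1.83$ — but this requires evaluating sine and cosine at a non-special point to a few digits, which is a qualitatively heavier verification than anything the paper needs. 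If you instead move your representative to $[\pi,2\pi]$, the cross term helps rather than hurts and the numerical step disappears entirely; that is the insight your proposal is missing.
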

	\begin{proof}
		Let $r \in (0,1)$ and $(x,y) \in (\X \times \X) \setminus C'_0$ be arbitrary.
		It suffices to show that there exists $(x',y') \in C'_0$ such that~\eqref{ine:app1} holds.
		As in the proof of Proposition~\ref{pro:app1}, there exists a point $\xi \in [-2\pi,-\pi]\cup[\pi,2\pi]$ such that $\Gamma(x,y) = \Gamma(\xi,\xi)$.
		Note that $(\xi,\xi) \in C'_0$.
		Moreover,~$\xi$ and $\sin \xi$ have opposite signs.
		Thus,
		\[
		\Lambda(\xi,\xi) \geq \frac{\xi^2/2+3}{2\xi^2+1} > 0.284 \,.
		\]
		Since $(x,y) \not\in C'_0$, we have $|x| > 26$ or $|y| > 26$.
		Without loss of generality, assume that the former holds.
		Then $|x| < x^2/26$, and $|y| + (x^2+1)/|y| > 52$.
		It follows that
		\[
		\begin{aligned}
		\Lambda(x,y) &\leq \frac{(|x|+1)^2/4 + (|y|+1)^2/4 + 3}{x^2 + y^2 + 1} \\
		& = 0.25 + \frac{0.5|x|+0.5|y|+3.25}{x^2+y^2+1} \\
		& < 0.25 + \frac{x^2}{52(x^2+y^2+1)} + \frac{0.5}{|y| + (x^2+1)/|y|} + \frac{3.25}{x^2+y^2+1} \\
		& < 0.25 + 1/52 + 0.5/52 + 3.25/(26^2+1) \\
		& < 0.284 \,. 
		\end{aligned}
		\]
		Hence, $\Lambda(x,y) < \Lambda(\xi,\xi)$, and~\eqref{ine:app1} holds with $(x',y') = (\xi,\xi)$.
	\end{proof}

	\bibliographystyle{ims} 
	\bibliography{qinbib-wasserdrift}
\end{document}